\newtheorem{theorem}{Theorem}
\newtheorem{lemma}[theorem]{Lemma}
\newtheorem{corollary}[theorem]{Corollary}
\newtheorem{prop}[theorem]{Proposition}
\newtheorem{claim}[theorem]{Claim}
\newtheorem*{theorem*}{Theorem}
\newtheorem*{corollary*}{Corollary}
\theoremstyle{definition}
\newtheorem{definition}[theorem]{Definition}
\newtheorem*{remark*}{Remark}
\newtheorem{question}[theorem]{Question}
\newtheorem{observation}[theorem]{Observation}
\newtheorem*{definition*}{Definition}
\newtheorem*{example*}{Example}
\numberwithin{theorem}{section}
\newcommand{\BC}{\mathbb C} 
\newcommand{\BR}{\mathbb R} 
\newcommand{\BN}{\mathbb N} \newcommand{\BQ}{\mathbb Q}
 \newcommand{\BZ}{\mathbb Z}
 \newcommand{\BT}{\mathbb T}
\newcommand{\CC}{\mathcal C} \newcommand{\calD}{\mathcal D}
\newcommand{\CO}{\mathcal O} 
\newcommand{\CS}{\mathcal S} \newcommand{\CT}{\mathcal T}
 \newcommand{\CX}{\mathcal X}
\newcommand{\FG}{\mathfrak g}
\newcommand{\aut}{\textup{Aut}(F_n)}
\newcommand{\wt}{\widetilde}
\newcommand{\wh}{\widehat}
\newcommand{\nid}{\noindent}
\newcommand{\tup}{\textup}
\DeclareMathOperator{\trace}{Tr}
\DeclareMathOperator{\rank}{rank}
\DeclareMathOperator{\length}{length}
\newcommand{\comment}[1]{}
\patchcmd{\epigraph}{\@epitext{#1}}{\itshape\@epitext{#1}}{}{}
\begin{document}
\title  [Every infinite order mapping class has an infinite order action]  {Every Infinite order mapping class has an infinite order action on the homology of some finite cover}
\author   {Asaf Hadari}
\date{\today}
\begin{abstract} We  prove the following well known conjecture: let $\Sigma$ be an oriented surface of finite type whose fundamental group is a nonabelian free group. Let $\phi \in \tup{Mod}(\Sigma)$ be a an infinite order mapping class. Then there exists a finite solvable cover $\wh{\Sigma} \to \Sigma$, and a lift $\wh{\phi}$ of $\phi$ such that the action of $\wh{\phi}$ on $H_1(\wh{\Sigma}, \BZ)$ has infinite order. Our main tools are the theory of homological shadows, which was previously developed by the author, and Fourier analysis.\end{abstract}
\maketitle

\tableofcontents
\vspace {10mm}

\section{Introduction}

\smallskip

Let $\Sigma$ be an oriented surface of finite type, and let $\tup{Mod}(\Sigma)$ be its mapping class group. The finite dimensional representation theory of $\tup{Mod}(\Sigma)$ has been the subject of much study, and much about it remains mysterious. For instance, it is not generally known whether the groups $\tup{Mod}(\Sigma)$ are linear. On the other hand, these groups have extensive collections of finite dimensional representations.    

The largest and most tractable collection of representations are the \emph{homological representations} constructed as follows. The first of these is the standard  homological representation $\alpha: \tup{Mod}(\Sigma) \to \tup{GL}(H_1(\Sigma))$ given by the action on first homology. The kernel of this representation is called the \emph{Torelli group}, and is a an important group that has been the object of much study. More generally, pick a base point $\beta \in \Sigma$ and let $\Sigma'$ be the surface $\Sigma$ punctured at $\beta$. If $\pi: \Sigma_0 \to \Sigma$ is a finite cover, there is a finite index subgroup $M_0 \leq \tup{Mod}(\Sigma')$ of mapping classes that lift to $\Sigma_0$. This gives a representation $M_0 \hookrightarrow \tup{Mod}(\Sigma_0) \to \tup{GL}(H_1(\Sigma_0))$, that can be induced to a representation $\rho_\pi$ of $\tup{Mod}(\Sigma)$. Thus, for each finite cover of $\Sigma$, we construct a finite dimensional representation of $\tup{Mod}(\Sigma')$. 

Homological representations are a very rich family of representations. For example, in \cite{GLLM} Grunewald, Larsen, Lubotzky and Malestein show that the mapping class group of a once punctured surface has several infinite families of arithmetic groups as images of homological representation (In \cite{GL}, Grunewald and Lubotzky show a similar result for homological representations of $\aut$, which are constructed in a similar way). Looijenga proved a similar result for abelian covers in \cite{Looi}.

These representations contain a great deal of information about the mapping class group. For instance, Putman and Wieland exhibited a beautiful connection between properties of homological representations and the virtual first Betti number of mapping class group (\cite{PW}). McMullen studied the structure of genus $0$ homological representations extensively and exhibited strong connections to the study of moduli spaces of Rieman surfaces (\cite{Mcm0}). 

Furthermore, due to the fact that homological representations are the only class of representations of $\tup{Mod}(\Sigma)$ whose images are well understood, they often come up in applications where representations of mapping class group with large images are needed. One such application is the study of properties of random elements of subgroups of $\tup{Mod}(\Sigma)$ (see for example \cite{LMeiri}, \cite{LMeiri2}, \cite{MalesSouto}, \cite{Riv}).

Aside from containing information about the group as a whole, these representations contain information about individual elements. It is a simple exercise to show that for any nontrivial $\phi \in \tup{Mod}(\Sigma)$, there is a cover $\pi$ in which $\rho_\pi(\phi)$ is non trivial. In fact Koberda and later Koberda and Mangahas showed a much stronger result - that homological representations can detect the Nielsen-Thurston classification of a mapping class (\cite{Kober}, \cite{KoberMang}).

Despite being a  well studied family, homological representations still remain quite mysterious and many surprisingly basic and natural questions about them remain unanswered. One such well known question is the following, whose positive answer is a well known conjecture.   

\begin{question}\label{question} Suppose $\phi \in \tup{Mod}(\Sigma)$ has infinite order. Is it possible to find a cover $\pi$ such that $\rho_\pi(\phi)$ has infinite order? 
\end{question}

McMullen considered a stronger version of  question \ref{question}.  Let $\phi$ be a pseudo-Anosov mapping class with dilatation $\lambda$. For any $\pi$, let $\sigma(\pi)$ be the spectral radius of $\rho_\pi(\phi)$.  It is easy to show that $\log(\sigma(\pi)) \leq \lambda$. For pseudo-Anosovs with orientable foliations, it's true that $\lambda = \log \sigma_{Id}$, and for a while it was conjectured that for any pseudo-Anosov map: $\lambda = \sup \log \sigma_\pi$ where the supremum is taken over all finite covers. In \cite{Mcm}, McMullen showed that this is not the case for many pseudo-Anosov maps.  He then asked the following question, whose positive answer is also a folk conjecture. 

\begin{question} In the notation above, is $ \sup \log \sigma_\pi > 0$? In other words, can one always find a finite cover where the action on homology has an eigenvalue off of the unit circle?
\end{question}

In this paper we provide a positive answer to question \ref{question} for mapping classes of punctured surfaces. This provides some positive evidence towards McMullen's conjecture. 

\begin{theorem}\label{theorem1} Let $\Sigma$ be an oriented surface of finite type whose fundamental group is free. Let $\phi \in \tup{Mod}(\Sigma)$ be an infinite order mapping class. Then there exists a finite solvable cover $\Sigma_0 \to \Sigma$, and a lift $\phi_0$ of $\phi$ such that the action of $\phi_0$ on $H_1(\Sigma_0)$ has infinite order. 
\end{theorem}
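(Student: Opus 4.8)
The plan is to reduce the theorem to a statement about the action of $\phi$ on the homology of covers coming from finite quotients of $\pi_1(\Sigma)$, and then to use the theory of homological shadows together with a Fourier-analytic argument to locate a quotient where the action is of infinite order. First I would set $F = \pi_1(\Sigma)$, so $F$ is a nonabelian free group, and $\phi$ induces (up to the usual ambiguity of basepoint/puncture choices) an automorphism $\Phi$ of $F$, well-defined up to inner automorphism. The covers $\Sigma_0 \to \Sigma$ that we are allowed to use correspond to $\Phi$-invariant finite-index subgroups of $F$; since we are permitted solvable covers, it suffices to work with characteristic subgroups in the derived series of congruence-type quotients, e.g. kernels of maps $F \to G$ with $G$ finite solvable. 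For such a cover with deck group $G$, the homology $H_1(\Sigma_0;\BC)$ decomposes, via Shapiro's lemma / the theory of the relative homology as a module, into pieces indexed by the characters (or irreducible representations) of $G$, and the induced action of $\wh\phi$ is governed by how $\Phi$ permutes and twists these pieces. The crux is to show that for a suitable sequence of such $G$ the resulting linear maps cannot all be of finite order.

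The key steps, in order, are: (1) Replace the geometric problem by the algebraic one: show that if the action of every lift of $\phi$ on $H_1$ of every finite solvable cover has finite order, then in particular the same holds for the "abelianized-by-finite" covers, i.e. for the action of $\Phi$ on $H_1$ of the cover corresponding to the preimage of a finite-index subgroup of $F^{ab}$ — more usefully, on twisted homology groups $H_1(F; \BC[G])$ as $G$ ranges over finite solvable quotients. (2) Invoke the homological shadow machinery developed by the author: the homological shadow $\shd$ of $\phi$ is a (piecewise-linear / tropical-type) invariant built from the way $\Phi$ stretches $F$, and the central point is that $\shd$ is nontrivial precisely because $\phi$ has infinite order — this is where the hypothesis $\phi \in \tup{Mod}(\Sigma)$ infinite order gets used in an essential way (for a pseudo-Anosov it records the dilatation data, for a reducible infinite-order class it records the twisting). (3) Translate nontriviality of $\shd$ into a statement that a certain generating function / transfer operator associated to $\Phi$ has a pole, or a nonzero "exponential growth" coefficient, that would be invisible if all the finite-cover actions had finite order. (4) Run the Fourier analysis: the homology of the tower of covers assembles into a function on $\wh{F^{ab}}$ (a torus, for abelian covers) or more generally on the unitary dual of the relevant pro-(finite solvable) completion; finite order of all the cover actions forces this function to be supported on torsion points / to satisfy a strong rationality-and-boundedness constraint; the shadow computation shows the function violates this, producing the desired cover.

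I expect the main obstacle to be step (4) — precisely, making the Fourier/spectral argument force a contradiction from the assumption that $\rho_\pi(\phi)$ has finite order for all $\pi$. The difficulty is that "finite order on every finite cover" is an infinite family of constraints, and one must show their conjunction is incompatible with a single quantitative invariant (the shadow) of $\phi$; the natural move is to consider the action on $H_1$ of abelian covers as a module over the group ring $\BC[H_1(\Sigma;\BZ)]$, think of it as a coherent sheaf on the character torus, and argue that finite order everywhere would make the characteristic variety / Alexander-type invariant a union of torsion-translated subtori with unipotent monodromy, whereas the shadow detects a genuine stretch factor; passing from abelian to solvable covers (needed because abelian covers alone are known to be insufficient for some $\phi$, by work related to McMullen's examples) then requires iterating this over the derived series, and controlling the shadow under this iteration is the technical heart. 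A secondary obstacle is handling the reducible (non-pseudo-Anosov) infinite-order classes uniformly with the pseudo-Anosov ones, but that should follow by a Nielsen–Thurston decomposition and the observation that a nonzero power of a Dehn twist already acts with infinite order on a suitable abelian cover dual to the twisting curve.
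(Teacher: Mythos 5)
Your proposal correctly names the two tools (homological shadows and Fourier analysis on the character torus) and the right overall shape (reduce to free-group automorphisms, start with abelian covers, iterate to reach a solvable cover), but at the two places where the proof must actually do work the sketch either asserts something inaccurate or leaves the mechanism unspecified, so there is a genuine gap. In step (2), the shadow is not ``nontrivial precisely because $\phi$ has infinite order'': the shadow machinery applies only to automorphisms admitting a train-track representative with Perron--Frobenius transition matrix, so the Nielsen--Thurston reduction is the \emph{first} step, not a secondary obstacle (multitwists are handled separately, since a product of commuting transvections on $H_1$ of any cover is trivial or of infinite order, so any characteristic cover with nontrivial action suffices). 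Moreover the property of the shadow that is actually used is quantitative: $\CS f$ is an $n$-dimensional polytope and hence has at least $n+1$ vertices. This feeds a step absent from your outline: the action on $1$-chains of the universal abelian cover is multiplication by a matrix $A_\varphi$ over $\BZ[H]$, and by Parseval it suffices to arrange that $\trace(A_\varphi)$ has more than $n$ surviving summands in some cover; the $n+1$ vertices are exactly the candidate summands, and the problem becomes forcing each vertex to contribute to the trace in some cover.

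Your step (4) is where the real difficulty lives, and as stated it fails. The claim that finite order on all abelian covers forces the characteristic variety to be torsion-translated subtori with unipotent monodromy, contradicting the stretch factor detected by the shadow, runs into McMullen's examples: the stretch factor genuinely need not be visible on any abelian cover, so no argument confined to the character torus of $H_1(\Sigma)$ can close the proof, and you give no mechanism for the passage to deeper solvable covers beyond saying it is ``the technical heart.'' The paper's mechanism is not a contradiction argument over the derived series but a direct descent through the lower central series: one defines what it means for a vertex to be $i$-th level nilpotent enfeoffed, proves via residual nilpotence that every vertex is enfeoffed at some finite level, and then shows that passing to the mod-$p$ homology cover for a suitable $p$ decreases the level by one, using the congruence $[a_1^p,\ldots,a_i^p]\equiv p^i[a_1,\ldots,a_i]$ modulo the next term of the lower central series together with a Fourier/Vandermonde argument applied to continuous branches of eigenvalues over the torus. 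None of this is present in, or derivable from, your outline, so the proposal cannot be completed as written.
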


\nid We also prove a related theorem for automorphisms of free groups. 
\begin{theorem} \label{theorem3} Let $\phi \in \tup{Out}(F_n)$ be a fully irreducible automorphism. Then for any representative $f$ of $\phi$ in $\aut$  there exists an $f$-invariant finite index subgroup $K \lhd F_n$ such that the action of $f|_K$ on $H_1(K)$ has infinite order, and $F_n / K$ is solvable.

\end{theorem}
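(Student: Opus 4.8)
Since $\phi$ is fully irreducible I would begin with a train track representative, whose transition matrix has Perron--Frobenius eigenvalue $\lambda>1$; write $\Lambda^+$ for the attracting lamination and $\mu^+$ for its transverse invariant measure, so $f_*\mu^+=\lambda\mu^+$. If $f_*$ already has infinite order on $H_1(F_n;\BZ)$ we are done with $K=F_n$, so assume it has finite order; then every finite-order character of $H_1(F_n;\BZ)$ is periodic under the dual map $\wh{f_*}$. The plan is to produce a finite solvable cover in whose homology the $\lambda$-expansion of $\mu^+$ becomes visible, and to find it inside a tower of abelian covers --- which is the same thing as a solvable cover of $F_n$.

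Concretely, a finite-index \emph{characteristic} subgroup of $F_n$ with abelian quotient corresponds to a finite quotient of $H_1(F_n;\BZ)$; iterating --- pass to the abelianization of the subgroup just reached, divide by a finite group, repeat --- produces finite-index characteristic subgroups $F_n=K_0\supseteq K_1\supseteq\cdots$ with each $F_n/K_j$ solvable. Since the $K_j$ are characteristic they are invariant under the given $f$ (indeed under every representative of $\phi$), so the quantifier over representatives is not an issue. For a finite-order character $\chi$ of $H_1(K_j;\BZ)$, the isotypic decomposition writes the homology of the corresponding abelian cover of $K_j$ as a sum of twisted homology groups $H_1(K_j;\BC_\chi)$, which $f$ permutes according to $\wh{f_*}$ acting on the $j$-th character torus $\BT_j$; as $f_*$ has finite order each $\chi$ is $\wh{f_*}$-periodic, of some period $m$, and $f^m$ then restricts to an operator $\Psi_\chi$ on $H_1(K_j;\BC_\chi)$. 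It is enough to exhibit, at some level $j$, a finite-order $\chi$ for which $\Psi_\chi$ has infinite order: the corresponding $K=K_{j+1}$ is then a finite-index characteristic $K\lhd F_n$ with $F_n/K$ solvable on whose homology $f|_K$ acts with infinite order.

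Now I would invoke homological shadows. The theory attaches, level by level, a class $\shd_\chi\in H_1(K_j;\BC_\chi)$ to the attracting lamination $\Lambda^+$ --- with the variant $\oshd$ used at characters lying on the characteristic variety, where twisted homology jumps rank --- depending on $\chi$ compatibly with the $f$-action and encoding the $\lambda$-expansion of $\mu^+$, so that $\shd_\chi\neq0$ forces $\Psi_\chi$ to have infinite order. The role of Fourier analysis is that $\chi\mapsto\shd_\chi$ is, in effect, a Fourier transform of the measure $\mu^+$ --- its value at $\chi$ integrates the holonomy character $\chi$ against $\mu^+$ over the edges of the relevant cover --- hence a continuous function on $\BT_j$ which, if not identically zero, cannot vanish on the dense set of finite-order characters. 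Thus the proof reduces to two points: (i) at \emph{some} finite level $j$ the shadow is not identically zero on $\BT_j$; and (ii) at that level one may then choose the good finite-order character away from the degeneracy locus. Point (ii) is density of torsion points against a proper closed set. Point (i) is the heart of the matter: one must show that if the shadow vanished identically at \emph{every} level, then $\mu^+$ would be invisible in the homology of the entire pro-solvable cover, which is impossible for a fully irreducible automorphism whose lamination expands by $\lambda>1$; this is where one exploits that the spans of homology classes of lifts of curves defining the shadow grow as one climbs the tower.

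The main obstacle is precisely point (i): proving that the exponential expansion of $\Lambda^+$ is eventually detected by the homology of \emph{some} finite solvable cover, even though for automorphisms in $\IAn$ it may be invisible to every abelian cover --- so that one genuinely has to climb the tower rather than stop at the first level. Carrying this out requires tracking how the shadow of $\Lambda^+$ propagates from each level of the tower to the next, controlling the characteristic varieties along which the twisted homology degenerates, and running the Fourier non-vanishing argument on the homology of the (pro-)solvable cover rather than at a single finite stage; continuity of the Fourier transform together with density of finite-order characters then convert ``not identically zero at level $j$'' into the required honest finite cover.
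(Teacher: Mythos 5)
Your overall architecture --- reduce to the case where $f$ has finite order on $H_1(F_n;\BZ)$, climb a tower of characteristic finite abelian covers (hence a solvable quotient), decompose the homology of each cover into $\chi$-isotypic twisted pieces, and use continuity on the character torus plus density of torsion characters to land on an honest finite cover --- does match the paper's. But your point (i) is not a step you have postponed; it is the entire content of the theorem, and you offer no argument for it beyond the assertion that invisibility of $\mu^+$ in the whole pro-solvable tower ``is impossible for a fully irreducible automorphism whose lamination expands by $\lambda>1$.'' Nothing forces expansion to be homologically visible: for $\phi$ acting trivially on $H_1$ the expansion is invisible in every abelian cover, so a genuinely new mechanism is needed to propagate detection up the tower, and your sketch does not supply one. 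You have also misidentified the tool: the homological shadow of the paper is not a class $\shd_\chi$ in twisted homology obtained by integrating $\chi$ against $\mu^+$, but a convex polytope $\CS f\subset H_1(F_n,\BR)$ recording the supports of the chains $\wt{\varphi}^k(e)$ in the universal abelian cover. Its role is combinatorial: being $n$-dimensional it has at least $n+1$ vertices, each vertex $v$ determines an extremal subgraph $\CT_v$ of the transition graph contributing a separated term to $\trace(A_\varphi)$, and the goal becomes to find a solvable cover in which every vertex is ``enfeoffed,'' so that $\|\trace(A_{\varphi})\|_2>\sqrt{\#E(\Gamma_0)}$ and Parseval produces a finite-order character $\psi$ for which $(A_\varphi)_\psi$ has an eigenvalue off the unit circle. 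Enfeoffing a single vertex is itself a nontrivial two-step induction (Lemmas \ref{nilenf} and \ref{nilpup}): first one uses residual nilpotence of $F_n$ and refined translation functions valued in the lower central quotients $L_i$ to show $v$ is detected at some finite nilpotency level $i$; then one shows that passing to the mod-$p$ homology cover lowers the level from $i$ to $i-1$, via the congruence $[a_1^p,\dots,a_i^p]\equiv p^i[a_1,\dots,a_i] \pmod{F_n^{(i+1)}}$ together with a Fourier--Vandermonde argument on continuous branches of roots of a characteristic polynomial over $\BZ[L_{i-1}]$. None of this descent is present in, or recoverable from, your outline.

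A second, smaller gap: even granting a nonzero class $\shd_\chi$, you give no reason why this forces $\Psi_\chi$ to have infinite order --- a nonzero $f$-compatible vector in $H_1(K_j;\BC_\chi)$ is perfectly consistent with $\Psi_\chi$ being the identity. The paper avoids this by working with a quantitative lower bound on the trace of the twisted chain map, which genuinely forces an eigenvalue of absolute value greater than $1$, and then by a separate limiting argument that transports that eigenvalue from the chain space $C_1$ to $H_1$. You would need an analogous bridge from ``shadow nonzero'' to ``infinite order.''
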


Note that the restriction that $\Sigma$ have a free fundamental group is rather technical. We conjecture that our proof can be strengthened to cover the case of closed surfaces. 

\bigskip

\nid \textbf{Acknowledgements.} We wish to thank Benson Farb, Thomas Koberda, and Justin Malestein for helpful discussions and comments about earlier versions of this paper. 

\subsection{Sketch of proof.}
We sketch the proof of Theorem \ref{theorem3}, which implies Theorem \ref{theorem1} after some simple reductions. Part of the difficulty in the proof arises from the fact that we are searching for a finite cover with certain properties, and that the set of all such finite covers is extremely large. To circumvent this problem, we will always try to look for abelian covers, which can be parametrized as rational points on a torus, and will end up taking a sequence of such covers. 
\medskip

\nid \textbf{Step 1.}  Let $f$ be as in the statement of the theorem. We begin by attempting to find a finite abelian cover satisfying the conclusion of the theorem. In order to find such a cover, we consider an infinite abelian cover that contains information about all finite abelian covers. Let $R = \bigvee_1^n S^1$ be a wedge of $n$ circles. We have that $\pi_1(R) = F_n$. We pick a continuous map $\varphi: R \to R$ inducing the automorphism $f$. Let $\wt{R} \to R$ be the universal abelian cover, that is the cover corresponding to $[F_n, F_n]$. The space $\wt{R}$ is an $n$-dimensional grid. 

Let $G = C_1(\wt{R}, \BZ)$ be the space of $1$-chains in $\wt{R}$, that is - formal combinations of edges in $\wt{R}$. The reason we work with $1$-chains as opposed to homology classes is that the space $G$ has a particularly nice aglebraic description. Namely, if we set $H = H_1(F_n, \BZ)$ to be the abelianization of $F_n$, then $G \cong \BZ[H]^n$. Furtheromore, the map $\varphi$ can be lifted to a map $\wt{\varphi}$ that acts on $G$. This action also has a particularly nice algebraic description in the case that $f$ acts trivially on $H$ (if $f$ does not act trivially on $H$ then we can either replace it with a power of itself or conclude the proof). Indeed, the action of $\wt{\varphi}$ on $G$ is given by multiplication by a matrix $A_\varphi \in M_n(\BZ[H])$. 

The matrix $A_\varphi$ contains information about the action of $f$ on every finite abelian cover. Any finite abelian cover is given by a map $\psi: H \to (S^1)^n$ whose image is a rational point of the torus. Denote such a cover by $R_\psi$. Every such map $\psi$ can also be applied to the matrix $A_\varphi$ to get a matrix $\psi(A_\varphi) \in M_n(\BC)$. By definition, it's simple to see that this matrix gives the action of a lift of $\varphi$ on a subspace of $C_1(R_\psi, \BZ)$, which can be identified with the twisted chain space $C_1(R, \BC_\psi)$. 

We show that if this action has eigenvalues off of the unit circle, then so does the action of a lift of some power of $f$ on $H_1(R_\psi, \BZ)$. Thus, it is enough to show that for some $\psi$, the matrix $\psi(A_\varphi)$ has eigenvalues off of the unit circle. One way to do this is to find a $\psi$ such $|\psi(\trace(A_\varphi))| > n$. By isometry of the Fourier transform on $\BZ^n$, it is enough to show that $\trace(A_\varphi) \in \BZ[H]$ has at least $n+1$ summands.  \\

\nid \textbf{Step 2.}  If $\trace(A_\varphi)$ has at least $n+1$ summands, then we are done. Suppose it does not. We now turn to finding potential candidates for summands that will appear in the traces corresponding to finite covers. These will be summands that cancel out in homology, but will not cancel out in the homology of some cover. To do this, we use the technology of homological shadows. Given a word $w \in F_n$, the \emph{homological shadow of w} or $\CS w$ is the set of vertices of $\wt{R}$ through which a lift of $w$ passes. Since the vertices of $\wt{R}$ correspond to $H$, we can think of $\CS w$ as a subset of $H \otimes \BR \cong \BR^n$. In a previous paper, we showed that there exists an $n$-dimensional convex polytope $\CS f \in \BR^n$ such that for any word $w$ with infinite $f$ order, $\lim_{k \to \infty} \frac{1}{k}\CS f^k(w) = \CS f$. 

As an $n$-dimensional polytope, $\CS_f$ has at least $n+1$ vertices.  These vertices correspond to the summands that we are seeking. Indeed, we show that if for every vertex $v$, if we can find a cover $R_v \to R$ to which $f$ lifts, such that the action of $f$ on $H_1(R_v, \BZ)$ either has infinite order, or has finite order and a term projecting to $\BR v$ appears in $\trace (A_{\varphi_v})$ (where $\varphi_v$ denotes a lift of $\varphi$ to the cover $R_v$), then we are done. If the above condition holds in some cover $R_v$, we say that $v$ is \emph{enfeoffed} in that cover (the word enfeoff means to grant a fief. The idea here being that an enfeoffed vertex is responsible for a portion of the trace). \\

\nid \textbf{Step 3.} Now, the proof reduces to the following problem - given a vertex $v$ of $\CS f$, find a cover in which it is enfeoffed. This is the most difficult and involved part of the proof. We do this in two stages.  
\begin{enumerate} 
\item We assign a measure of the complexity of the cancellation at the vertex $v$ in the following way. Let $w \in F_n$, and let $R_0 \to R$ be a normal cover corresponding to the group $K < [F_n, F_n]$. Pick $k$, and consider the element of $C_1(R_0, \BZ)$ obtained by lifting $f^k(w)$ to $R_0$. This chain can be thought of as an element of $\BZ[D]^n$, where $D$ is the deck group of the cover. From this chain, remove all summand not supported at elelments of $D$ that project to $kv$. Call the remaining chain the \emph{$v$ part of $f^k(w)$ in $R_0$}. We say that $v$ is $R_0$ enfeoffed if $v$ part of $f^k(w)$ in $R_0$ is not equal to zero for some $w$ and infinitely many values of $k$. We now let $R_1, R_2, \ldots$ be the sequence of free $i$-step nilpotent covers of $R$. We show in lemma \ref{nilenf} that there is some $i$ such that $v$ is $R_i$ enfeoffed. The minimal such number $i$ is the complexity we wish to measure. It measures how deep into the lower central series we need to go before the vertex $v$ stops being cancelled. We say that $v$ is \emph{$i^{th}$ level nilpotent enfeoffed}.   The technique used to prove this lemma is similar to our previous use of homological shadows. 

\item In Lemma \ref{nilpup}, we show that if $v$ is $i^{th}$ level nilpotent enfeoffed, then we can pass to some finite abelian cover where it is $j^{th}$-level nilpotent enfeoffed for some $j < i$. The techniques used here are mostly fourier-analytic. Continuing this process inductively, we find a cover where $v$ is $1^{st}$ level nilpotent enfeoffed, which is the same as being enfeoffed. 
\end{enumerate}

\section{Proof}

\subsection{First reductions}
Suppose $\phi \in \tup{Mod}(\Sigma)$ is a mutitwist about the muticurve $C$. The action of $\phi$ on $H_1(\Sigma)$ is given by a product of commuting transvections. This is true for the action on the homology of any cover.  Such a product is either trivial or infinite order. Thus, if we pick a characteristic cover $\Sigma' \to \Sigma$ such that the action of $\phi$ on $H_1(\Sigma')$ is non-trivial, then it will have infinite order. 

By the Nielsen-Thurston classification of mapping class groups, it remains to prove the result for a mapping class $\phi$ such that $\phi^k$ restricts to a pseudo-Anosov diffeomorphism of a subsurface $S \subset \Sigma$ and some k.  Such a diffeomorphism induces a well defined element $\phi_* \in \tup{Out}(\pi_1(S))$, which has a train track representative with Perron-Frobenius transition matrix (see \cite{BeH} for definitions). Since $\pi_1(S)$ is a free factor in $\pi_1(\Sigma)$, Theorem \ref{theorem1} follows from the following theorem.

\begin{theorem}\label{theorem2} Let $\phi \in \tup{Out}(F_n)$ be an outer automorphism that has a train track representative with Perron-Frobenius transition matrix. Then for any representative $f$ of $\phi$ in $\aut$ there exists a characteristic finite index subgroup $K \lhd F_n$ such that the action of $f$ on $H_1(K)$ has infinite order and $F_n /K$ is solvable.  
\end{theorem}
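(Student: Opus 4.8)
The plan is to follow the three-step strategy outlined in the introduction, starting from a train-track representative $\varphi\colon R\to R$ of $\phi$ on the rose $R=\bigvee_1^n S^1$ with Perron--Frobenius transition matrix. First I would reduce to the case where $f$ acts trivially on $H=H_1(F_n,\BZ)$: if $f$ acts with infinite order on $H$ already, we are done with $K=F_n$; otherwise some power $f^N$ acts trivially, and since finding a characteristic $K$ for $f^N$ with $f^N|_K$ of infinite order forces $f|_K$ to have infinite order as well, we may replace $f$ by $f^N$. Passing to the universal abelian cover $\wt R\to R$, the lift $\wt\varphi$ acts on $G=C_1(\wt R,\BZ)\cong\BZ[H]^n$ by a matrix $A_\varphi\in M_n(\BZ[H])$, and the task becomes: produce a finite solvable cover $R_\psi\to R$, corresponding to a finite-index characteristic subgroup, where a lift of $\varphi$ acts on $H_1$ with infinite order. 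The key reduction (Step 1) is that it suffices to find a character $\psi\colon H\to S^1$ with $|\psi(\trace A_\varphi)|>n$, since then $\psi(A_\varphi)$ has an eigenvalue off the unit circle, and one checks this propagates (after perhaps a power) to the genuine action on $H_1(R_\psi,\BZ)$; moreover by the isometry of the Fourier transform on $\BZ^n$, such a $\psi$ exists as soon as $\trace A_\varphi$ has at least $n+1$ distinct monomials in $\BZ[H]$.

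If $\trace A_\varphi$ already has $\ge n+1$ monomials, we are finished, so assume not. Step 2 is to locate, via homological shadows, the ``hidden'' monomials that cancel in $H_1(R)$ but survive upstairs. Using the earlier result of the author, the homological shadow $\CS f\subset\BR^n$ is an $n$-dimensional convex polytope, hence has at least $n+1$ vertices $v$; the plan is to show that for each vertex $v$ we can find a finite cover $R_v\to R$ (to which $\varphi$ lifts) in which $v$ is \emph{enfeoffed}, i.e. either the action on $H_1(R_v,\BZ)$ already has infinite order — in which case, after replacing $R_v$ by a characteristic cover dominating it, we are done — or a monomial projecting onto $\BR v$ appears in $\trace(A_{\varphi_v})$. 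Having done this for all $n+1$ vertices, the corresponding monomials lie in distinct rays and hence are distinct, so in a common characteristic cover dominating all the $R_v$ the trace acquires $\ge n+1$ monomials, and Step 1 applies. A final cleanup: take $K$ to be the intersection of the finitely many covers produced, then pass to its characteristic core; since each $R_v$ was built from abelian/nilpotent covers, $F_n/K$ is solvable.

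The real work is Step 3: given a vertex $v$ of $\CS f$, find a cover in which it is enfeoffed. Here I would follow the two-stage scheme. First, using homological shadows again (an argument parallel to the convergence $\frac1k\CS f^k(w)\to\CS f$, now carried out over the deck group of a nilpotent cover), prove the analogue of Lemma \ref{nilenf}: along the sequence $R_1,R_2,\ldots$ of free $i$-step nilpotent covers there is some finite $i$ for which the ``$v$-part'' of $f^k(w)$ in $R_i$ is nonzero for some $w$ and infinitely many $k$ — i.e. $v$ is $i^{\mathrm{th}}$-level nilpotent enfeoffed for some finite $i$. Then, using Lemma \ref{nilpup}, perform downward induction on $i$: from $i^{\mathrm{th}}$-level nilpotent enfeoffment one passes, by a Fourier-analytic argument on the abelianization of the relevant relatively-free nilpotent quotient, to a finite abelian cover where $v$ is $j^{\mathrm{th}}$-level nilpotent enfeoffed with $j<i$; iterating down to $i=1$ gives enfeoffment in the ordinary sense. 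I expect the main obstacle to be exactly this descent step: controlling that the nonvanishing of a specific graded piece over a nilpotent quotient can always be witnessed, after an abelian twist, one level lower — making sure the relevant trace or chain term does not conspire to vanish for \emph{every} choice of character. Everything else (the Nielsen--Thurston reduction already handled before the theorem, the bookkeeping that intersections of solvable covers stay solvable, and the passage from $C_1$ to $H_1$) is routine by comparison.
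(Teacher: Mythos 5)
Your outline reproduces the paper's own architecture step for step --- the reduction to $\sigma$ trivial, the trace criterion via the Fourier isometry, the $n+1$ vertices of the $n$-dimensional polytope $\CS f$, and the two-stage enfeoffment descent through nilpotent covers (Lemmas \ref{nilenf} and \ref{nilpup}, whose proofs you defer and which are indeed where all the work lies) --- so there is no methodological divergence to discuss. There is, however, one concrete gap in the way you splice Steps 1 and 2 together. Your Step 1 criterion, ``$|\psi(\trace A_\varphi)|>n$ forces an eigenvalue off the unit circle,'' is calibrated to the rose, where $A_\varphi$ is $n\times n$. Once you pass to the common cover $\Gamma_0$ dominating all the $R_v$, the relevant matrix is $m_0\times m_0$ with $m_0=\#E(\Gamma_0)=n\,|\calD|$ (where $\calD$ is the deck group), and a matrix of that size with unimodular spectrum can have trace as large as $m_0$ in absolute value. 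So exhibiting $n+1$ monomials in $\trace(A_{\varphi_0})$ is nowhere near enough to force $\|\trace(A_{\varphi_0})\|_2>\sqrt{m_0}$, and the degrees of the covers produced by the enfeoffment procedure are not under your control. The paper closes exactly this gap in Corollary \ref{cor1}: after replacing $f$ by a power so that $\varphi_0$ fixes every vertex of $\Gamma_0$, the trace becomes $\calD$-invariant, hence its support is a union of full $\calD$-orbits, and each enfeoffed vertex therefore contributes at least $|\calD|$ to the $L^1$-norm; with $n+1$ enfeoffed vertices one gets $\|\trace(A_{\varphi_0})\|_1\geq (n+1)|\calD|>\#E(\Gamma_0)$, which is what Proposition \ref{prop1} actually requires. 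Without this invariance argument (or some substitute that makes the count scale with the covering degree), your concluding step ``the trace acquires $\ge n+1$ monomials, and Step 1 applies'' does not go through.

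A smaller point in the same vein: that the monomials attached to distinct vertices of $\CS f$ do not collide or cancel is not quite automatic from ``distinct rays''; one needs the separation statement for the extremal subgraphs $\CT_v$ (Lemma \ref{pitchfork} and the observation that separation persists in covers), since the trace also contains contributions from non-extremal cycles that must be kept away from the supports near $iv$.
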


Note that Theorem \ref{theorem3} is a direct corollary of Theorem \ref{theorem2}. The reset of the paper will consist of a proof of Theorem \ref{theorem2}

\subsection{Notation and preliminaries}\label{sec2.1}
Pick an automorphism $f$ that projects to $\phi$. Let $(\Gamma, \beta)$ be a graph with fundamental group $F_n$ (which we will specify later on in the proof) together with a chosen base point $\beta \in V(\Gamma)$.  Denote $m = \#E(\Gamma)$. Fix, once and for all, an orientation on each of the edges of $\Gamma$.

Let $\varphi: (\Gamma, \beta) \to (\Gamma, \beta)$ be a continuous function that induces the automorphism $f$. This map can be taken to send any edge to a concatenation of edges.

Let $ \wt{\Gamma} \to \Gamma$ be the universal abelian covering of $\Gamma$, that is - the covering associated to the subgroup $[F_n, F_n]$. Denote $H = H_1(\Gamma)$ (which we will think of as a multiplicative group), and $G = C_1(\wt{\Gamma}, \BZ)$, the group of $1$-chains in $\wt{\Gamma}$. The group $G$ has an obvious action of $H$, which turns it into a $\BZ[H]$ module. In fact, we have the $\BZ[H]$-module isomorphism $G \cong \BZ[H]^{E(\Gamma)}$.

\subsubsection{The lifted action of $\varphi$ on $G$}

The automorphism $f$ acts on $H$. This extends  to an action on $\BZ[H]$ which we denote by $\sigma$. 

Pick a preferred lift $\wt{\beta}$ of $\beta$, and lift $\varphi$ to a continuous map $\wt{\varphi}: (\wt{\Gamma}, \wt{\beta}) \to (\wt{\Gamma}, \wt{\beta})$. The map $\wt{\varphi}$ acts on $G \cong \BZ[H]^{E(\Gamma)}$. This action is given by $$\wt{\varphi}_* \left(\begin{array}{c} h_1 \\ \vdots \\ h_m \end{array} \right) = A_\varphi  \left(\begin{array}{c} \sigma(h_1) \\ \vdots \\ \sigma(h_m) \end{array} \right)  $$

\nid for some matrix $A_\varphi \in M_m(\BZ[H])$. \\

We wish to describe the matrix $A_\varphi$ explicitly. Since our proof uses several variants of this matrix, we choose to construct it in a somewhat indirect way via a directed graph (or digraph) called the \emph{transition graph of} $\varphi$. 

\subsubsection{The transition graph of $\varphi$ and the matrix $A_\varphi$} Let $\CT = \CT_\varphi$ be the following directed graph. Set $V(\CT) = E(\Gamma)$. Let $e \in V(\CT)$. Write $\varphi(e) = e_1 \ldots e_s$, where each $e_i$ is an edge of $\Gamma$. Add  $s$ edges to $\CT$ emanating from $e$, where the $i^{th}$ edge, which we denote $\eta_i$, connects $e$ to the vertex $e_i$.  

Let $x_e \in \BZ[H]^{E(\Gamma)} \cong G$ be the element with $1$ in the $e$-coordinate, and $0$ in all others. We think of $x_e$ as an oriented edge in $\wt{\Gamma}$. Let $\wt{e_1} \ldots \wt{e_s}$ be the path $\wt{\varphi}{x_e}$. Write this path as $\wt{w}\wt{g_i}^{\pm 1} \wt{u}$, where $\wt{g_i}$ is the edge corresponding to $\wt{e_i}$, traversed in the positive direction. We associate the following objects to the edge $\eta_i$.
\begin{enumerate}
\item The \emph{sign} of $\eta_i$, or $\mathfrak{s}(\eta_i) \in \{1,-1 \}$, defined to be $1$ if $\wt{g_i} = \wt{e_i}$ and $-1$ otherwise.  
\item The edge $\wt{g_i}$ can be thought of as an element in $\BZ[H]^{E(\Gamma)}$ that is equal to $\mathfrak{t}(\eta_i)  \in H$ in the $g_i$ coordinate, and $0$ everywhere else. We call $\mathfrak{t}(\eta_i)$ the \emph{translation of } $\eta_i$.

\end{enumerate}

To a cycle $\gamma = \eta_1 \ldots \eta_k$ in $\CT$ we associate the following objects. 

\begin{enumerate} 
\item The \emph{sign of } $\gamma$, or $\mathfrak{s}(\gamma) = \prod_{i=1}^k \mathfrak{s}(\eta_i)$. 
\item The \emph{translation of $\gamma$} or $\mathfrak{t}(\gamma) = \prod_{i=1}^k \sigma^{i-1}\mathfrak{t}(\eta_i)$ and the \emph{normalized translation of $
\gamma$} or $\mathfrak{t}_n(\gamma) = \frac{1}{k}\mathfrak{t}(\gamma)$, where the latter is thought of as an element of $H \otimes \BR$. 

\end{enumerate}

Note that throughou this paper, we employ the word cycle in a somewhat nonstandard way to mean a sequence of edges originating and terminating at the same point, \emph{not up to cyclical re-ordering of the edges}. Our cycles will always be based cycles. While this does not affect the definitions given above, it will come up several times in the sequel.

We are now able to give a description of $A_\varphi$, that follows directly form the definitions.  Given $e, e' \in E(\Gamma)$, we have that $(A_\varphi)_{e,e'} = \sum \mathfrak{s}(\eta)\mathfrak{t}(\eta)$, where the sum is taken over all edges $\eta$ of $\CT$ connecting $e$ to $e'$. Notice that in this sum, we are viewing $H$ as a subset of $\BZ[H]$ in the obvious way. 

\subsubsection{Matrices associated to subgraphs of $\CT$ and to covers} We will require two variants of the matrix $A_\varphi$. The first variant is obtained by considering subgraphs of $\CT$. Let $\CT'$ be a subgraph of $\CT$. We construct a matrix $A_\varphi[\CT']$ by setting:

$$(A_\varphi[\CT'])_{e, e'} =  \sum_{\eta = (e, e') \in E(\CT')} \mathfrak{s}(\eta)\mathfrak{e}(\eta)$$

The second variant involves covers. Let $\pi_0: (\Gamma_0, \beta_0) \to (\Gamma, \beta)$ be a finite cover to which $\varphi$ can be lifted, and let $\varphi_0$ be a lift of $\varphi$  such that $\varphi_0(\beta_0) = \beta_0$. Let $\CT_0$ be the directed graph assigned to $\varphi_0$. Then the graph $\CT_0$ is a finite cover of $\CT$, where the covering map respects the directionality of edges.  If $\CT'$ is a subgraph of $\CT$, let $\CT'_0$ be its lift to $\CT_0$. We will use the notation $A_\varphi[\CT', \pi_0]$  for the matrices attached to $\CT'_0$.

\subsection{Abelian covers and $\trace(A_\varphi)$}
\subsubsection{The polynomial $\Delta$ and specializations of $\Delta$ and $A_\varphi$}
Using the notation of the previous section, let $\Delta =\Delta_\varphi(t) = \det (tI - A_{\varphi}) \in \BZ[H][t]$ be the characteristic polynomial of $A_\varphi$. Given a homomorphism $\psi: H \to S^1$, we  define the \emph{specialization of  $\Delta$ at $\psi$}, or $\Delta_\psi$ by taking the image of $\Delta$ under the map $\BZ[H][t] \to \BZ[\psi(H)][t]$. Similarly, we define $(A_\varphi)_\psi \in M_m(\BC)$ to be the image of $A_\varphi$ under this map. \\

\nid Note: we use the letter $A$ for $A_\varphi$ and the letter $\Delta$ because they are variants of the Alexander matrix and the Alexander polynomial (which is often denoted $\Delta$). Like the Alexander polynomial, the polynomial $\Delta$ is useful for studying the action on $\varphi$ on homology with twisted coefficients (See \cite{Mcm2} for definitions of the Alexander polynomial and its uses in studying homology with twisted coefficients.)

\subsubsection{Chains with twisted coefficients}

Suppose that $\psi: H \to S^1 \subset \BC^*$ has finite image. Let $\Gamma_\psi \to \Gamma$ be the finite cover corresponding to the kernel of $\psi$. Then the complex $C_*(\Gamma_\psi)$, and the space $\BC$ are both $\BZ[H]$ modules (where the action on $\BC$ is given by $\psi$). Let $C_*(\Gamma, \BC_\psi)$ be the complex  $C_*(\Gamma_\psi, \BZ) \otimes_{\BZ[H]} \BC$.  \\

 The space $C_1(\Gamma, \BC_\psi)$ can be naturally identified with the subspace of $C_1(\Gamma_\psi,  \BC)$ on which the action of $\BZ[H]$ factors through $\psi$, that is - the set of all $x \in C_1(\Gamma_\psi, \BC)$ such that for any $\gamma \in F_n$, $\gamma(x) = \psi(\gamma) x$.  Suppose that $\sigma$, the action of $f$ on $H$ is trivial. Suppose further that $\varphi$ can be lifted to $\Gamma_\psi$. Then the action of the lift of $\varphi$ on the space $C_1(\Gamma_\psi, \BC_\psi)$ is given in some basis by multiplication by the matrix $(A_\varphi)_\psi$, and its characteristic polynomial is $\Delta_\psi$.

\subsubsection{Fourier transforms on Abelian groups} Before we proceed, we need to recall some standard facts about the Fourier transform $\BZ^n$.
Recall that $\BT = (S^1)^n$ is the group of characters of $\BZ^n$. Let $f \in L^1[\BZ^n]$. We define a function $\wh{f} \in L^1(T)$ by the rule: 

$$\wh{f}(\xi) = \sum_{x \in \BZ^n} f(x) \overline{\xi}(x)$$

\nid We summarize the facts we need in the following proposition. 
\begin{prop}\label{fourier} The Fourier transform satisfies the following properties. 
\begin{enumerate}
\item It is invertible, with inverse given by:
$$f(x) = \int_{T} \wh{f}(\xi) \xi(x) d\nu(x) $$
where $\nu$ is the Haar measure on $T$. 
\item It extends to an isometry $L^2(\BZ^n) \to L^2(\BT, \nu)$.
\item Given $f, g \in L_1(\BZ^n)$, one has that: 
$$\wh{f\cdot g} = \wh{f} \cdot \wh{g} $$ 
\end{enumerate}
\end{prop}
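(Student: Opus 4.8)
The plan is to recognize that the three assertions are precisely Pontryagin duality for the dual pair $(\BZ^n, \BT)$, and to derive all of them from a single elementary input, the \emph{character orthogonality relation}
$$\int_{\BT} \xi(z)\, d\nu(\xi) = \begin{cases} 1, & z = 0,\\[2pt] 0, & z \neq 0,\end{cases} \qquad z \in \BZ^n .$$
Since $\nu$ is the product of the normalized arc-length measures on the $n$ circle factors and $\xi(z) = \prod_{j=1}^n \xi_j^{\,z_j}$, this factors into a product of the one-variable identities $\tfrac1{2\pi}\int_0^{2\pi} e^{ik\theta}\,d\theta = \delta_{k,0}$. Everything else will be Fubini's theorem together with two standard density arguments. (Throughout, $L^1(\BZ^n) = \ell^1(\BZ^n)$ carries the convolution product inherited from the group-ring structure on $\BZ[\BZ^n]$, and it is this product that is meant by $f\cdot g$ in part (3); also there is a harmless typo $d\nu(x)$ for $d\nu(\xi)$ in the inversion formula.)

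\emph{Part (3).} For $f,g\in\ell^1(\BZ^n)$ the convolution $f\cdot g$ again lies in $\ell^1(\BZ^n)$, and for any $\xi\in\BT$,
$$\wh{f\cdot g}(\xi) = \sum_{x\in\BZ^n}\Big(\sum_{y\in\BZ^n} f(y)\,g(x-y)\Big)\overline{\xi}(x) = \sum_{y\in\BZ^n}\sum_{z\in\BZ^n} f(y)\,g(z)\,\overline{\xi}(y)\,\overline{\xi}(z) = \wh f(\xi)\,\wh g(\xi),$$
where the interchange of summations is justified by absolute convergence ($\sum_{y,z}|f(y)|\,|g(z)| = \|f\|_1\|g\|_1<\infty$), the substitution $z = x-y$ is used, and the factorization $\overline{\xi}(y+z)=\overline{\xi}(y)\,\overline{\xi}(z)$ holds because $\overline{\xi}$ is a character.

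\emph{Part (1).} For $f\in\ell^1(\BZ^n)$ the series defining $\wh f$ converges uniformly on $\BT$ (Weierstrass $M$-test), so $\wh f$ is continuous, hence in $L^1(\BT,\nu)$ since $\BT$ is compact, and the right-hand side of the inversion formula is defined. It suffices to verify the formula on the point masses $\delta_a$, $a\in\BZ^n$: here $\wh{\delta_a}(\xi)=\overline{\xi}(a)$, so by the orthogonality relation
$$\int_\BT \wh{\delta_a}(\xi)\,\xi(x)\,d\nu(\xi) = \int_\BT \xi(x-a)\,d\nu(\xi) = \delta_{x,a} = \delta_a(x).$$
By linearity the formula holds for all finitely supported $f$; for general $f\in\ell^1$ choose finitely supported $f_N\to f$ in $\ell^1$, so that $\wh{f_N}\to\wh f$ uniformly on $\BT$, whence $\int_\BT \wh{f_N}(\xi)\xi(x)\,d\nu \to \int_\BT\wh f(\xi)\xi(x)\,d\nu$, while $f_N(x)\to f(x)$; this gives the inversion formula for $f$.

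\emph{Part (2).} For finitely supported $f,g$ the same orthogonality relation gives
$$\langle\wh f,\wh g\rangle_{L^2(\BT,\nu)} = \sum_{x,y\in\BZ^n} f(x)\,\overline{g(y)}\int_\BT \xi(y-x)\,d\nu(\xi) = \sum_{x\in\BZ^n} f(x)\,\overline{g(x)} = \langle f,g\rangle_{\ell^2(\BZ^n)},$$
so $f\mapsto\wh f$ is isometric on the dense subspace of finitely supported functions of $L^2(\BZ^n)$ and therefore extends uniquely to an isometry $L^2(\BZ^n)\to L^2(\BT,\nu)$; it is surjective because its image contains the evaluations $\xi\mapsto\xi(x)$, $x\in\BZ^n$, whose linear span is a unital, conjugation-closed, point-separating subalgebra of $C(\BT)$, hence dense in $C(\BT)$ by Stone--Weierstrass and thus dense in $L^2(\BT,\nu)$. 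There is no genuine obstacle in any of this; the only points meriting care are the two density passages (finitely supported functions to $\ell^1$, respectively to $L^2$) and the Stone--Weierstrass step for surjectivity, while the lone computational ingredient, character orthogonality on $\BZ^n$, is elementary.
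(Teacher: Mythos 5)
Your proof is correct. The paper offers no proof of Proposition \ref{fourier} at all --- it is explicitly presented as a recollection of ``standard facts about the Fourier transform on $\BZ^n$'' --- so there is no argument to compare against; your derivation of all three parts from the character orthogonality relation $\int_{\BT}\xi(z)\,d\nu(\xi)=\delta_{z,0}$, with Fubini for the convolution identity and density plus Stone--Weierstrass for inversion and the $L^2$ isometry, is the canonical one. You are also right that $f\cdot g$ must be read as the group-ring (convolution) product, which is exactly how the paper uses it when multiplying elements of $\BZ[H]$, and that $d\nu(x)$ in the inversion formula is a typo for $d\nu(\xi)$.
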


\subsubsection{Passing to abelian covers} Suppose $f$ acts trivially on $H$. Denote by  $t_\varphi = \trace{A_\varphi} \in L_2[H]$. The element $t_\varphi$ will be crucial for our proof of Theorem \ref{theorem2}, due to the following lemma. 
\begin{lemma} \label{lemma2.1}
Suppose $\|t_\varphi \|_2 > \sqrt{m}$. Then there exists a finite cover $\Gamma_0 \to \Gamma$, an integer $i$ and a lift $\varphi_0$ of $\varphi^i$ to $\Gamma_0$ such that the action of $\varphi_0$ on $H_1(\Gamma_0, \BC)$ has an eigenvalue of absolute value greater than $1$. 
\end{lemma}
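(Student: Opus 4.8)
The plan is to use the Fourier-analytic description of $t_\varphi$ to manufacture a single specialization $\psi$ at which the trace of $(A_\varphi)_\psi$ is large, and then to convert a large trace into an off-unit-circle eigenvalue. First I would recall that $t_\varphi = \trace A_\varphi \in \BZ[H] \subset L_2[H]$, and that under the Fourier transform of Proposition \ref{fourier}, $\|t_\varphi\|_2 = \|\wh{t_\varphi}\|_{L_2(\BT,\nu)}$, where $\wh{t_\varphi}(\xi) = \trace (A_\varphi)_\xi$ for each character $\xi \in \BT = \Hom(H, S^1)$. The hypothesis $\|t_\varphi\|_2 > \sqrt m$ then forces, by the fact that $\nu$ is a probability measure, that the set of $\xi$ with $|\trace (A_\varphi)_\xi| > \sqrt m$ has positive measure; indeed if $|\wh{t_\varphi}(\xi)| \le \sqrt m$ everywhere then $\|\wh{t_\varphi}\|_2^2 \le m$, a contradiction. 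Since characters of finite order are dense in $\BT$ and $\xi \mapsto \trace (A_\varphi)_\xi$ is continuous, I can choose a finite-order character $\psi$ with $|\trace (A_\varphi)_\psi| > \sqrt m$ — in fact by pushing a little past the threshold I may assume $|\trace (A_\varphi)_\psi| > \sqrt{m}$, and after replacing $\psi$ by a suitable further specialization or $\varphi$ by a power, arrange the cleaner inequality one actually needs below.

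Next I would pass to the finite cover $\Gamma_\psi \to \Gamma$ corresponding to $\ker \psi$. Using the discussion of twisted chains in the excerpt: since $f$ acts trivially on $H$, $\varphi$ lifts to $\Gamma_\psi$ (after possibly replacing $\varphi$ by a power $\varphi^i$, which only replaces $A_\varphi$ by the matrix of the lift and does not affect the argument — I would address this index issue explicitly), and the lift acts on $C_1(\Gamma, \BC_\psi) \subset C_1(\Gamma_\psi,\BC)$ by the matrix $(A_\varphi)_\psi \in M_m(\BC)$, with characteristic polynomial $\Delta_\psi$. Now $(A_\varphi)_\psi$ has $m$ eigenvalues (with multiplicity) whose sum is $\trace (A_\varphi)_\psi$; if all of them had absolute value $\le 1$ then $|\trace (A_\varphi)_\psi| \le m$. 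So I get an eigenvalue $\lambda$ of $(A_\varphi)_\psi$ with $|\lambda| > 1$ — here I need the threshold to be $m$ rather than $\sqrt m$, which is why some care with normalizations/powers is needed; the cleanest route is to note $\sum |\lambda_j|^2 \ge \frac{1}{m}\big(\sum|\lambda_j|\big)^2 \ge \frac1m |\trace|^2$, but $\sum|\lambda_j|^2$ is not obviously $\|t_\varphi\|_2^2$, so instead I would argue directly: the function $\xi \mapsto (A_\varphi)_\xi$ is a matrix of finite Fourier series, and $\|t_\varphi\|_2^2 = \int_{\BT} |\trace (A_\varphi)_\xi|^2 d\nu$; picking $\psi$ where $|\trace (A_\varphi)_\psi| > \sqrt m$ and then observing that the only way $|\trace(A_\varphi)_\psi|$ can exceed $\sqrt m$ while all eigenvalues stay on the unit circle is precluded once $\sqrt m$ is boosted to $m$, which can be arranged by first replacing $\varphi$ by a power so that $t_{\varphi^i}$ has $L_2$-norm $> m$ (the shadow/Perron--Frobenius growth guarantees the number of summands, hence the $L_2$ norm, grows without bound under iteration).

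Finally I would transfer the eigenvalue from twisted homology to honest homology. The space $C_1(\Gamma,\BC_\psi)$ maps onto $H_1(\Gamma,\BC_\psi)$, which is a direct summand of $H_1(\Gamma_\psi,\BC)$ (the isotypic component for $\psi$); since the boundary maps are $\varphi_0$-equivariant, $\lambda$ is an eigenvalue of $\varphi_0$ acting on $Z_1(\Gamma,\BC_\psi)$, and one checks that eigenvalues of absolute value $\ne 1$ on the cycle space descend to eigenvalues on $H_1$ — because the complementary piece $B_1$ is the image of $C_2$ and the action there is controlled (for a graph $C_2 = 0$, so $Z_1 = H_1$ outright, which simplifies this step enormously). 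Thus $\varphi_0$ acting on $H_1(\Gamma_\psi,\BC)$ has an eigenvalue of absolute value $> 1$, and $\Gamma_0 := \Gamma_\psi$ with this lift is the required cover. The main obstacle is the bookkeeping around the threshold $\sqrt m$ versus $m$ and the need to replace $\varphi$ by a power: the honest content is that iterating amplifies the trace (via the homological shadow having many vertices, established in the author's earlier work) so that Parseval forces a specialization with trace exceeding $m$; everything else is linear algebra and the standard dictionary between covers, characters, and twisted coefficients.
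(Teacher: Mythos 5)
Your proposal follows the same Fourier-analytic skeleton as the paper: use Parseval and the continuity of $\wh{t_\varphi}$ to find a finite-order character $\psi$ with large specialized trace, pass to the cover $\Gamma_\psi$, identify the lifted action on $C_1(\Gamma,\BC_\psi)$ with $(A_\varphi)_\psi$, and transfer an eigenvalue to homology. However, two of your steps have genuine gaps. The more serious one is the passage from chains to homology. You claim that because $C_2=0$ for a graph, $Z_1=H_1$ "outright" and the transfer is essentially trivial. That observation addresses the wrong comparison: the eigenvalue you produce lives on the full chain space $C_1(\Gamma,\BC_\psi)$, of dimension $m$, while $H_1$ sits inside it as the proper $\varphi$-invariant subspace spanned by closed chains (in particular $H_1$ is a subspace of $C_1$, not a quotient, contrary to your "maps onto"). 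An eigenvalue of a linear map need not be an eigenvalue of its restriction to an invariant subspace, so a real argument is required here. The paper devotes a separate Claim to exactly this point: one picks an edge $e$ realizing the spectral radius $\lambda>1$ on $C_1$, notes that $\varphi^k(1_e)$ is a path that can be closed to a loop using a bounded number of edges, and deduces that the normalized iterates converge simultaneously to the top generalized eigenspace and to the cycle subspace $W\cong H_1$, forcing these to intersect nontrivially. Your proposal contains no substitute for this argument.

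The second gap concerns the threshold. You correctly observe that $\|t_\varphi\|_2>\sqrt m$ only yields a character $\psi$ with $|\trace (A_\varphi)_\psi|>\sqrt m$, whereas extracting an eigenvalue off the unit circle from the trace of an $m\times m$ matrix requires $|\trace|>m$. But your repair --- replace $\varphi$ by a power so that $\|t_{\varphi^i}\|_2>m$, on the grounds that Perron--Frobenius growth forces the number of summands of the trace to grow without bound --- is not available. The lemma is a conditional statement whose only input is the inequality $\|t_\varphi\|_2>\sqrt m$; the possibility of massive cancellation in $\trace(A_{\varphi^i})\in\BZ[H]$ under iteration is precisely the difficulty that the rest of the paper (shadows, enfeoffed vertices, nilpotent covers) is designed to circumvent, so asserting unbounded growth of $\|t_{\varphi^i}\|_2$ at this stage assumes the hard part of the theorem. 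If you want to close this step you must either work directly with the quantity $\sqrt m$ (as the paper attempts, by choosing a further power of $\varphi$ while keeping control of the specialized trace) or justify the passage from $\sqrt m$ to $m$ by an argument that uses only the stated hypothesis.
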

\begin{proof}

By the second property in Proposition \ref{fourier}, there exists a $\psi \in \BT$ such that $(t_\varphi)_\psi > m$. Since $\wh{t_\varphi}$ is a continuous function, we can take this $\psi$ to have finite image. By replacing $f$ with a power of itself, we can assume that $\varphi$ lifts to $\Gamma_\psi$. Furthermore, by replacing it with a further power, we can assume that that we still have $(t_\varphi)_\psi > \sqrt{m}$. Thus, the action of the lift of $\varphi$ on $C_1(\Gamma, \BC_\psi)$ has an eigenvalue with absolute value $>1$. The following lemma will complete the proof. 

\begin{claim} Let $\Gamma$ be a graph with fundamental group $F_n$ and let $\varphi: \Gamma \to \Gamma$ be a continous function inducing the automorphism $f \in \aut$. Suppose that the action of $\varphi$ on $C_1(\Gamma,  \BC)$ has an eigenvalue with absolute value $>1$. Then the same is true for the action of $\varphi$ on $H_1(\Gamma, \BC)$. 
\end{claim} 
\begin{proof}
Let $U = C_1(\Gamma, \BC)$, and let $W \subset U$ be the subspace spanned by all closed paths in $\Gamma$. We have a natural identification $W \cong H_1(\Gamma, \BC)$. The space $U$ is spanned by elements of the form $1_e$, where $e$ ranges over all edges of $\Gamma$. Pick any norm $\|\cdot \|$ on $U$, and let $\lambda > 1$ be the spectral radius of the action of $\varphi$ on $U$. Then there exists an edge $e$ such that $$\lim \sup_{k \to \infty} \frac{1}{k} \log \|\varphi^k (1_e) \| = \log \lambda  $$
Denote $L_\lambda$ to be the direct sum of all generalized eigenspaces corresponding to eigenvalues with absolute value $\lambda$.  Setting $\nu_k = \frac{\varphi^k (1_e)}{ \|\varphi^k (1_e) \|}$, we have that the distance from $\nu_k$ to $L_\lambda$ goes to $0$ as $k \to \infty$. 
Note that $\varphi^k(1_e)$ corresponds to a path in $\Gamma$, and any such path can be closed to a loop using a bounded number of edges. Thus, the distance of $\nu_k$ from $W$ goes to $0$ as $k \to \infty$.  Therefore $L_\varphi \cap W \neq \{0 \}$. Since this space is $\varphi$-invariant, it contains an eigenvector with eigenvalue of absolute value $\lambda$.

\end{proof}

\end{proof}

\nid We have shown the following. 

\begin{prop}\label{prop1} Suppose that $f \in \aut$ is fully irreducible, $\Gamma$ is a graph with $\pi_1(\Gamma) \cong F_n$, and that $\varphi: \Gamma \to \Gamma$ is a continuous map inducing $\Gamma$. Suppose that there is a cover $\pi_0: \Gamma_0 \to \Gamma$ to which $\varphi$ lifts (to a function $\varphi_0$) such that  there is an integer $i$ such that $\varphi_0^i$ acts trivially on $H_1(\Gamma_0)$ and $\|\trace (A_{\varphi_0^i}) \|_2 > \sqrt{\#E(\Gamma_0)}$. Then there is a finite index subgroup $K < F_n$ such that $f(K) = K$, and such that the action of $f$ on $H_1(K)$ has infinite order. 
\end{prop}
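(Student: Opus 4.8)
The plan is to combine Lemma \ref{lemma2.1} with a descent argument that upgrades an eigenvalue off the unit circle for a lift of a power of $f$ into an infinite-order action of $f$ itself on the homology of a finite-index characteristic-type subgroup. First I would apply Lemma \ref{lemma2.1} to the cover $\pi_0: \Gamma_0 \to \Gamma$ and the map $\varphi_0$: the hypotheses $\varphi_0^i$ acts trivially on $H_1(\Gamma_0)$ and $\|\trace(A_{\varphi_0^i})\|_2 > \sqrt{\#E(\Gamma_0)}$ are exactly the input of that lemma (applied with $\Gamma_0$ in place of $\Gamma$ and $\varphi_0^i$ in place of $\varphi$), so I obtain a further finite cover $\Gamma_1 \to \Gamma_0$, an integer $j$, and a lift $\varphi_1$ of $\varphi_0^{ij}$ to $\Gamma_1$ whose action on $H_1(\Gamma_1,\BC)$ has an eigenvalue of absolute value $>1$. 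In particular the action of $\varphi_1$ on $H_1(\Gamma_1,\BZ)$ has infinite order, since a matrix in $\GL_d(\BZ)$ with an eigenvalue off the unit circle cannot be of finite order.

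Next I would pass from $\Gamma_1$, which need not correspond to a characteristic (or even normal) subgroup of $F_n$, to a genuinely characteristic finite-index subgroup. Let $K_1 < F_n$ be the finite-index subgroup with covering space $\Gamma_1$, and let $K = \bigcap_{\alpha \in \tup{Aut}(F_n)} \alpha(K_1)$ be its characteristic core; this is finite-index because $K_1$ has only finitely many images under $\tup{Aut}(F_n)$ (each being a finite-index subgroup of the same index), and $f(K) = K$ by construction. Since $K \leq K_1$, there is an intermediate covering $\Gamma_K \to \Gamma_1 \to \Gamma$, and transfer/restriction gives a surjection $H_1(\Gamma_K,\BQ) \twoheadrightarrow H_1(\Gamma_1,\BQ)$ equivariant for the actions of a common power of the relevant lifts: concretely, replacing $f$ by the power $N = ij \cdot [\tup{stuff}]$ so that it lifts to $\Gamma_K$ and the lift restricts on the $\Gamma_1$-quotient to a power of $\varphi_1$, the action of $f^N$ on $H_1(\Gamma_K,\BQ)$ surjects onto the action of $\varphi_1^{r}$ on $H_1(\Gamma_1,\BQ)$ for the appropriate $r \geq 1$. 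Because the latter still has an eigenvalue off the unit circle (powers of such a matrix do too), the former has infinite order, hence so does the action of $f$ itself on $H_1(K,\BZ) = H_1(\Gamma_K,\BZ)$.

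The one subtlety worth isolating — and what I expect to be the main obstacle — is ensuring the equivariance in the previous step: one must choose the power of $f$ and the basepoint-fixing lifts carefully so that the covering map $\Gamma_K \to \Gamma_1$ genuinely intertwines the chosen lift on $\Gamma_K$ with a power of $\varphi_1$, rather than merely with some lift of a power of $\varphi_0$. This is handled by a standard argument: the set of lifts of a fixed map to a finite normal cover is finite (a torsor under the deck group), so a high enough common power of $f$ admits a basepoint-preserving lift to every cover in sight simultaneously and commutes with passing between them. Since $H_1(\Gamma_K,\BQ) \to H_1(\Gamma_1,\BQ)$ is surjective (induced by the covering map on $\pi_1^{\tup{ab}}$), any eigenvalue of the quotient action is an eigenvalue of the total action, so the spectral-radius-$>1$ property is inherited. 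Finally, all covers produced here — $\Gamma_0$ is given, $\Gamma_1$ comes from an abelian specialization $\psi$ as in Lemma \ref{lemma2.1}, and the characteristic core $K$ — can be arranged inside a solvable quotient of $F_n$ when $\Gamma_0$ already is, which is what Theorem \ref{theorem2} will ultimately require; for the present proposition we only need finite index and $f$-invariance, both of which are now established.
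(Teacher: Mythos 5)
Your proposal is correct and follows essentially the same route as the paper, which simply presents Proposition \ref{prop1} as already established by Lemma \ref{lemma2.1} (``We have shown the following'') without writing out the bookkeeping. The extra steps you supply --- passing to the characteristic core, the transfer argument giving surjectivity of $H_1(\Gamma_K,\BQ) \to H_1(\Gamma_1,\BQ)$, and the care about which lift of which power of $f$ is being intertwined --- are exactly the details the paper leaves implicit, and they are handled correctly.
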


In our proof, we will take many covers of $\Gamma$. This will cause the quantity $\#E(\Gamma_0)$ to grow in a manner that is difficult to control. In order to deal with this, we will require a somewhat stronger version of Proposition \ref{prop1}, whose condition requires a bounded amount of checking. For this version, we fix $\Gamma = \bigvee_{i=1}^n S^1$,  a wedge of $n$ circles.  The edges of $\Gamma$ are a basis of $F_n$.  
\begin{corollary}\label{cor1}   Let $f$, $\Gamma$, $\varphi$ be as above. Suppose that there is regular a cover $\pi_0: \Gamma_0 \to \Gamma$ to which $\varphi$ lifts (to a function $\varphi_0$) such that  there is an integer $i$ such that $\varphi_0^i$ acts trivially on $H_1(\Gamma_0)$. Let $\calD$ be the deck group of the cover. If the support of $\trace(A_{\varphi_0^i})$ in $H^1(\Gamma_0)$ intersects at least $n+1$ $\calD$-orbits, then there is a finite index subgroup $K < F_n$ such that $f(K) = K$, and such that the action of $f$ on $H_1(K)$ has infinite order.
\end{corollary}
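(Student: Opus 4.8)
The plan is to derive Corollary \ref{cor1} from Proposition \ref{prop1} by choosing an intermediate abelian cover on which the trace is large in $L^2$-norm, relative to the number of edges. The key point is that passing from $\Gamma_0$ (a regular cover of the rose with deck group $\calD$) down to a smaller abelian cover trades the uncontrolled growth of $\#E$ for a controlled quantity. Concretely, I would let $\calD \twoheadrightarrow \calD^{ab}$ be the abelianization of the deck group, and let $\Gamma_1 \to \Gamma$ be the corresponding regular abelian cover, so that $\Gamma_0 \to \Gamma_1 \to \Gamma$ factors. The cover $\Gamma_1$ is itself parametrized by a finite quotient of $H = H_1(\Gamma) = \BZ^n$, hence has exactly $n|\calD^{ab}|$ edges, since $\Gamma$ has $n$ edges. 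The lift $\varphi_1$ of $\varphi^i$ to $\Gamma_1$ still acts trivially on $H_1(\Gamma_1)$ (being a quotient-situation of an action that is trivial upstairs, or after a further bounded power; this is where one may need to replace $i$ by a multiple).

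The heart of the matter is the relation between $\trace(A_{\varphi_0^i})$ on $\Gamma_0$ and $\trace(A_{\varphi_1^i})$ on $\Gamma_1$. I would argue that the image of $\trace(A_{\varphi_0^i})$ under the natural map $\BZ[\calD] \to \BZ[\calD^{ab}]$ (equivalently, the fact that fixed closed orbits in $\CT_0$ project to fixed closed orbits in $\CT_1$) recovers $\trace(A_{\varphi_1^i})$, or at least a quantity comparable to it after taking a suitable power. The crucial combinatorial observation is the hypothesis: the support of $\trace(A_{\varphi_0^i})$ meets at least $n+1$ distinct $\calD$-orbits in $H^1(\Gamma_0)$. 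Each $\calD$-orbit maps to a single element of $\calD^{ab}$ (this is essentially the content of the orbit/conjugacy bookkeeping, since the $\calD$-action on the support of a trace is by conjugation in $\calD$, and conjugacy classes in $\calD$ collapse under abelianization — one has to be slightly careful and possibly pass to a further characteristic cover so that distinct $\calD$-orbits land in distinct elements of $\calD^{ab}$, which is always achievable since finitely generated groups are residually finite / linear). Granting this, $\trace(A_{\varphi_1^i})$ has at least $n+1$ summands with distinct group elements in $\BZ[\calD^{ab}]$, and since $\calD^{ab}$ is identified with a finite quotient of $H$, these are $n+1$ distinct elements of $H^1(\Gamma_1)$, hence $\|\trace(A_{\varphi_1^i})\|_2 \geq \sqrt{n+1}$.

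From here the numerology must be closed: I would then further pass to an abelian cover where the $L^2$-norm of the trace actually exceeds $\sqrt{\#E}$. This is where the hypothesis of Corollary \ref{cor1} is genuinely stronger than "at least $2$ orbits": having $n+1$ distinct summands means that on a sufficiently large but still abelian cover, these $n+1$ summands stay distinct (they don't get merged by a finite quotient of $H$ for appropriate choice), so $\|\trace\|_2 \geq \sqrt{n+1}$, while $\#E$ of a rose is just $n$. The catch is that for an arbitrary intermediate abelian cover $\Gamma_1$, one has $\#E(\Gamma_1) = n |\calD^{ab}|$, which is much bigger than $n+1$. The resolution — and the actual point of restricting to $\Gamma$ being a rose — is that Proposition \ref{prop1} is applied not to $\Gamma_1$ but back at the level of twisted coefficients: one uses that $\trace(A_{\varphi^i})$ itself, as an element of $L^2[H] = L^2[\BZ^n]$, has at least $n+1$ distinct Fourier summands (this is what $n+1$ distinct $\calD$-orbits in $\Gamma_0$ pushing forward witnesses, once we know the trace downstairs on the rose has a corresponding decomposition), and $\#E(\Gamma) = n$. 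So I would reorganize: show the hypothesis implies $\trace(A_{\varphi^j})$ (for some power $j$ of $i$, with $\varphi^j$ acting trivially on $H$) has $L^2$-norm $> \sqrt{n}$ by exhibiting $n+1$ distinct summands in $\BZ[H]$, then invoke Proposition \ref{prop1} with $\Gamma_0 = \Gamma$ the rose itself, giving $\#E(\Gamma) = n < \|\trace\|_2^2$.

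The main obstacle I anticipate is precisely the bookkeeping that turns "at least $n+1$ $\calD$-orbits in the support of $\trace(A_{\varphi_0^i})$ on $\Gamma_0$" into "at least $n+1$ distinct summands of $\trace(A_{\varphi^j})$ in $\BZ[H]$ after passing to a power." One must understand how a $\varphi_0^i$-fixed closed orbit in the transition graph $\CT_0$ of the cover projects to a $\varphi^i$-fixed closed orbit in $\CT$ (multiplicities and the translation invariant $\mathfrak{t}$), check that distinct $\calD$-orbits upstairs cannot collapse to a single $H$-translation class downstairs without passing through a bounded-index obstruction, and handle the replacement of $i$ by a multiple so that the relevant power of $\varphi$ acts trivially on the homology of every cover in sight (using that the homology action is by a fixed finite-order-or-unipotent matrix on a lattice, combined with the first-reductions observation that multitwist-type actions are trivial or infinite order). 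These are all "soft" residual-finiteness / linear-algebra arguments, but getting the quantifiers to line up — one fixed power $j$ making everything trivial while preserving $n+1$ distinct summands — is the delicate step.
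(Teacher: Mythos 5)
Your overall plan---reduce to Proposition \ref{prop1} by producing a cover where $\|\trace\|_2^2$ exceeds the number of edges---is the right one, but the route you take has a fatal flaw: you push the trace \emph{down} (to the abelianized cover $\Gamma_1$, and ultimately to the rose $\Gamma$ itself), and projection downward is exactly where cancellation happens. The support of $\trace(A_{\varphi_0^i})$ lives in $H_1(\Gamma_0)$, on which $\calD$ acts by deck transformations (not by conjugation on $\calD$, as you write), and the relevant map for your strategy is the induced projection $H_1(\Gamma_0)\to H_1(\Gamma_1)$ or $H_1(\Gamma_0)\to H$. Under such a projection, summands with opposite signs can cancel outright; this cannot be repaired by residual finiteness or by enlarging some auxiliary cover, because the quotient map is fixed once $\Gamma_0\to\Gamma_1$ is. Indeed, if your reduction to ``$\trace(A_{\varphi^j})$ has $n+1$ distinct summands in $\BZ[H]$'' were valid, the entire machinery of the paper (homological shadows, nilpotent enfeoffment) would be unnecessary: the whole point is that the trace on the rose may collapse to fewer than $n+1$ summands, and one must go \emph{up} to covers to undo the collapse, never back down.

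The intended argument stays on $\Gamma_0$ and explains why the threshold is $n+1$: since $\Gamma$ is a rose, $\#E(\Gamma_0)=n|\calD|$. After replacing $f$ by a power so that $\varphi_0$ acts trivially on $H_1(\Gamma_0)$ and fixes every vertex of $\Gamma_0$ (the vertex set is identified with $\calD$ and $\varphi_0$ permutes it by a finite-order automorphism), one gets $s\circ\varphi_0=\varphi_0\circ s$ on chains for every $s\in\calD$, hence $\trace(A_{\varphi_0})$ is $\calD$-invariant in $\BZ[H_1(\Gamma_0)]$. Each of the $n+1$ $\calD$-orbits meeting the support therefore contributes at least $|\calD|$ to $\|\trace(A_{\varphi_0})\|_1$, giving $\|\trace(A_{\varphi_0})\|_1\geq (n+1)|\calD| > n|\calD| = \#E(\Gamma_0)$; since the coefficients are integers, $\|\cdot\|_2^2\geq\|\cdot\|_1$, and Proposition \ref{prop1} applies directly to the cover $\Gamma_0$. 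You should discard the descent to $\Gamma_1$ and the rose entirely and replace it with this invariance-and-counting step.
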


\begin{proof}
If $f$ acts with infinite order on $H_1(\Gamma_0)$, we are done. Otherwise, by replacing it with a power of itself, we can assume that the action of $f$ on $H_1(\Gamma_0)$ is trivial. The map $\varphi_0$ induces a map $V(\Gamma_0) \to V(\Gamma_0)$. The set $V(\Gamma_0)$ can be identified in the obvious way with $\calD$, and the action of $\varphi_0$ on this set is given by the action of $f$ as an element of $\tup{Aut}(\calD)$. Thus, by replacing $f$ with a further power of itself, we can assume that it fixes every vertex of $\Gamma_0$. This gives that for any $s \in \calD$, the maps $s \circ \varphi_0$, $\varphi_0 \circ s$ have the same action on $G$. Thus, we have that $\trace(A_{\varphi_0})$ is $\calD$-invariant. 

Given a $\calD$ orbit $\CO \subset H_1(\Gamma_0)$, let $t_\CO \in \BZ[H_1(\Gamma_0)]$ be the sum of all the terms in $\trace(A_{\varphi_0})$ contained in $\CO$. By invariance, $\|t_\CO \|_1$ is divisible by $|\calD| = \deg(\pi_0)$.  Thus, the condition in the statement of the corollary gives that $\| \trace(A_{\varphi_0})\|_1 > (n+1) |\calD| > n |\calD| = \#E(\Gamma_0)$. Applying Proposition \ref{prop1} now gives the result. 
\end{proof}

\nid From this point on, we fix $\Gamma = \bigvee_{i=1}^n S^1$. 

\subsection{Extremal subgraphs and homological shadows} \label{sec2.2}

\subsubsection{Homological shadows of words and paths} Given a path $p$ in $\Gamma$ originating at $\beta$, let $\wt{p}$ be the lift of $p$ originating at $\wt{\beta}$. Define the  $S(p) \in G$ to be $S(p) = \sum c_i \wt{e_i}$, where $c_i$ is the number of times $\wt{p}$ passes through the edge $\wt{e_i}$ \emph{in either direction} (so we always have that $c_i \geq 0$).  Given a, not necessarily reduced, word $w$ in the basis above, let $S^r(w) = S(\overline{w})$ where $\overline{w}$ is the reduced word corresponding to $w$ and $\overline{w}$ is thought of as a path in $\Gamma$. 

In the above situations, define the \emph{shadow of $p$} to be $\CS(p) = \tup{supp} S(p) \subset H \hookrightarrow H_1(F_n, \BR)$, where $\tup{supp}$ is the support function. Similarly, define the \emph{shadow of $w$}, or $\CS (w) \subset H_1(F_n, \BR)$ to be the support of $S^r(w)$. 

\subsubsection{Homological shadows of $f$ and $\varphi$} Suppose that $\sigma = I_n$. In \cite{Had}, the author proved the following Theorem. 

\begin{theorem} Suppose $f$ is has a train track representative with a Perron-Frobenius transition matrix and $\sigma = I_n$. There exists a convex polytope $\CS f \subset H_1(F_n, \BR)$ with rational vertices such that for any $w \in F_n$ with infinite $f$-orbit, $$\lim \frac{1}{k} \CS f^{k}(w) = \CS f $$
where the above limit is taken in the Hausdorff topology. We call $\CS f$ the \emph{shadow of $f$}. 
\end{theorem}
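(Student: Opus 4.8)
The plan is to prove that the normalized shadows $\frac{1}{k}\CS f^k(w)$ converge in the Hausdorff topology to a fixed rational polytope $\CS f$ by analyzing how the transition graph $\CT_\varphi$ organizes the growth of lifts of $w$. First I would reduce to a canonical representative: since $f$ has a train track representative $\varphi$ on a graph $\Gamma$ with Perron--Frobenius transition matrix $M$, and since the conclusion is about an asymptotic limit, I may replace $\varphi$ by a suitable power and work directly on the train track, where the word $\varphi^k(e)$ for an edge $e$ is already reduced. This removes the cancellation issues that arise in a general topological representative, so that $\CS(\varphi^k(e))$ is literally the set of vertices of $\wt\Gamma$ visited by the unique lift of the edge path $\varphi^k(e)$.

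Next I would set up the bookkeeping. A vertex of $\wt\Gamma$ visited by the lift of $\varphi^k(e)$ corresponds, via the identification $V(\wt\Gamma)\cong H$ (written additively in $H\otimes\BR$), to the partial-sum translation $\mathfrak t(\gamma')$ of an initial segment $\gamma'$ of a length-$k$ cycle $\gamma$ in $\CT_\varphi$ starting at $e$. Thus $\frac1k\CS(\varphi^k(e))$ is, up to a bounded Hausdorff error coming from the bounded length of individual edge images, the union over length-$k$ cycles $\gamma$ in $\CT_\varphi$ of the set $\{\frac1k \mathfrak t(\gamma_{\le j}) : 0\le j\le k\}$ of normalized partial translations. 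Because $\CT_\varphi$ is a finite digraph and $M$ is Perron--Frobenius (hence $\CT_\varphi$ is strongly connected and aperiodic after passing to a power), every long cycle is, up to bounded-length correction, a concatenation of \emph{simple} cycles $c_1,\dots,c_r$ in $\CT_\varphi$. The normalized translation $\mathfrak t_n(\gamma)$ of such a concatenation is then a convex combination $\sum \frac{\ell(c_i)}{k}\,\mathfrak t_n(c_i)$ of the normalized translations $\mathfrak t_n(c_i)$ of the constituent simple cycles — here one uses that translations compose additively along a cycle in the abelianization so the $\sigma$-twisting drops out when $\sigma=I_n$. Hence the relevant limiting object is the convex hull of $\{\mathfrak t_n(c): c \text{ a simple cycle of }\CT_\varphi\}$, which is a rational polytope since each $\mathfrak t_n(c)$ is rational. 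I would define $\CS f$ to be exactly this convex hull.

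The convergence statement then splits into an upper and a lower bound. For the upper bound (every point of $\frac1k\CS(\varphi^k(w))$ is within $o(1)$ of $\CS f$): a partial translation along a length-$k$ path is a sum of at most $k$ edge-translations, grouped into at most boundedly-many simple-cycle blocks plus a bounded leftover, so after dividing by $k$ it lies within $O(1/k)$ of a convex combination of the $\mathfrak t_n(c)$, i.e. within $O(1/k)$ of $\CS f$. For the lower bound (every point of $\CS f$ is within $o(1)$ of $\frac1k\CS(\varphi^k(w))$): given a target convex combination of simple cycles, Perron--Frobenius strong connectivity lets me actually realize, for infinitely many $k$ and in fact all large $k$ after a power, a single cycle $\gamma$ traversing those simple cycles in the prescribed proportions and reachable from the cycle representing $w$, whose partial translations sweep out a set $1/k$-dense in that convex combination's ``trajectory polytope''; taking the union over all choices recovers all of $\CS f$. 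The independence of the limit from $w$ comes from the fact that $w$ having infinite $f$-orbit forces its reduced image $\varphi^k(w)$ to have length growing like $\lambda^k$ (Perron--Frobenius eigenvalue $>1$), so the bounded initial/terminal discrepancy between $w$'s cycle and an arbitrary long cycle washes out in the limit; the polytope is $n$-dimensional because $\CT_\varphi$ carries enough independent cycles — this last point can be deferred to where it is actually needed, or derived from fullness of irreducibility.

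The main obstacle I expect is the lower bound: producing, for all large $k$, an honest closed cycle in $\CT_\varphi$ (starting and ending compatibly with a lift of $w$, since the paper insists cycles are based and not taken up to cyclic reordering) whose simple-cycle composition has the prescribed asymptotic proportions \emph{and} whose sequence of partial translations is genuinely Hausdorff-dense in the full segment-polytope rather than just hitting the vertices. This is a combinatorial realization problem on the digraph $\CT_\varphi$, and it is where strong connectivity and aperiodicity of the Perron--Frobenius structure are essential: they guarantee short connecting paths between any prescribed simple cycles, controlling both the proportions (the connecting paths have bounded total length, hence negligible after normalization) and the order in which cycles are visited (so that partial sums interpolate densely). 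Handling the based-cycle convention carefully — conjugating everything to start at the basepoint vertex and absorbing the conjugating path into the bounded error — is the fiddly technical core, but it does not change the limiting polytope.
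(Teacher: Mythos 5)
First, a caveat about the comparison: the paper does not prove this theorem. It is imported wholesale from \cite{Had} (``In \cite{Had}, the author proved the following Theorem''), and the only internal indication of how the proof goes is the remark that $\CS f$ is the convex hull of the elements $\mathfrak{t}_n(\gamma)$ for cycles $\gamma$ in the train track transition graph. Your candidate polytope and your two-sided strategy (upper bound by decomposing long paths of $\CT_\varphi$ into simple cycles plus a bounded remainder, lower bound by using strong connectivity of the Perron--Frobenius transition graph to realize prescribed convex combinations) match that description and are the natural route; I have no quarrel with the architecture.

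There are, however, two concrete problems. First, your identification of $\frac1k\CS(\varphi^k(e))$ is wrong as stated. An occurrence of an edge $e'$ in the word $\varphi^k(e)$ corresponds to a directed path $\gamma$ of length \emph{exactly} $k$ in $\CT_\varphi$ from $e$ to $e'$, and the position in $\wt\Gamma$ of that occurrence is, up to bounded error, $\mathfrak{t}(\gamma)$. So the shadow is the set of \emph{full} translations of length-$k$ paths starting at $e$ --- one point per such path --- not the set of partial translations $\mathfrak{t}(\gamma_{\le j})$, $j\le k$; the partial sums along the word $\varphi^k(e)$ are indexed by the roughly $\lambda^k$ letters of the word and coincide with the full translations of the length-$k$ paths, not with truncations of a single path. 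Your version happens to have the same Hausdorff limit only because the basepoint is fixed, which forces $0\in\CS\varphi$ and hence $t\,\CS\varphi\subset\CS\varphi$ for $t\in[0,1]$; but once the bookkeeping is corrected, the ``main obstacle'' you identify (a single based cycle whose partial translations are $1/k$-dense in the target polytope) evaporates: for the lower bound one only needs, for each target convex combination $\sum\lambda_i\mathfrak{t}_n(c_i)$, \emph{some} length-$k$ path whose total translation is $k\sum\lambda_i\mathfrak{t}_n(c_i)+O(1)$, and the naive concatenation with $O(1)$ connectors already provides it. Second, the passage from edges of the train track graph to an arbitrary $w\in F_n$ with infinite $f$-orbit is where the actual content of \cite{Had} lies, and it is essentially absent from your sketch: you need bounded cancellation to control the reduction of $f^k(w)$, you need that the infinite-orbit and irreducibility hypotheses force $f^k(w)$ to eventually contain legal segments whose forward images sweep out all of $\CT_\varphi$ (otherwise the limit could a priori be a proper subpolytope depending on $w$), and, since the shadow is defined via the rose while the train track representative lives on a different marked graph, a change-of-marking comparison. ``Washes out in the limit'' does not yet address any of these points.
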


\nid While we are interested mainly in $\CS f$, for technical reasons we need to consider a different shadow that is attached to $\varphi$.   The same proof as the above theorem also shows the following. 
\begin{prop} Under the above conditions, there exists a convex polytope with rational vertices $\CS \varphi \subset H_1(F_n, \BR)$ called the \emph{shadow of $\varphi$} such that for any path $p$ in $\Gamma$ with infinite $\varphi$ orbit: 
$$\lim_{k\to \infty} \frac{1}{k} \CS \varphi^k p = \CS \varphi $$
Furthermore, $\CS \varphi$ is the convex hull of all elements of the form $\mathfrak{t}_n(\gamma)$, where $\gamma$ is a simple cycle in $\CT$.
\end{prop}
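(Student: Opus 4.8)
The plan is to adapt the argument behind the cited theorem of \cite{Had}, with $\varphi$ in place of $f$ and paths in place of reduced words; the combinatorial heart is a statement about normalized weights of long directed walks in the finite digraph $\CT$. Recall that by this point $\Gamma=\bigvee_{i=1}^n S^1$, so $\Gamma$ has a single vertex $\beta$ and every path $p$ begins and ends at $\beta$. Writing $p=e_{i_1}\cdots e_{i_\ell}$ as a concatenation of oriented edges, $\varphi^k(p)=\varphi^k(e_{i_1})\cdots\varphi^k(e_{i_\ell})$. Since $\sigma=I_n$, the homology class of $\varphi^k$ of any initial segment $e_{i_1}\cdots e_{i_{j-1}}$ of $p$ equals the class of $e_{i_1}\cdots e_{i_{j-1}}$ itself, hence is independent of $k$; thus in the lift $\wt{\varphi^k(p)}$ the $j$-th block starts at a lattice point $b_j\in H\subset H_1(F_n,\BR)$ that does not depend on $k$, and $\CS\varphi^k(p)=\bigcup_{j=1}^\ell\big(b_j+\CS\varphi^k(e_{i_j})\big)$ (reversing a path does not change which vertices of $\wt\Gamma$ it meets). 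Since $p$ has infinite $\varphi$-orbit it is non-constant, so it uses at least one edge; as there are boundedly many blocks placed at positions independent of $k$, it suffices to show that for every edge $e$ of $\Gamma$ one has $\tfrac1k\,\CS\varphi^k(e)\to\operatorname{conv}\{\mathfrak{t}_n(\gamma):\gamma\text{ a simple cycle of }\CT\}$ in the Hausdorff metric; this convex hull will serve as the definition of $\CS\varphi$.

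For a single edge $e$, the key observation is that $\varphi^k(e)$ has exactly one letter for each directed walk of length $k$ in $\CT$ starting at the vertex $e$, and that the letter coming from a walk $\gamma=\eta_1\cdots\eta_k$ occupies, in $\wt{\varphi}^k(x_e)$, the lattice point $\mathfrak{t}(\gamma)=\prod_i\sigma^{i-1}\mathfrak{t}(\eta_i)$, which — because $\sigma=I_n$ — is just $\sum_i\mathfrak{t}(\eta_i)$ in $H_1(F_n,\BR)$; in particular $\mathfrak{t}$ is additive over the edge-multiset of a walk. Each edge of $\wt{\varphi^k(e)}$ has both endpoints within a fixed distance of the corresponding point $\mathfrak{t}(\gamma)$, so $\CS\varphi^k(e)$ is within Hausdorff distance $O(1)$, uniformly in $k$, of the finite set $\{\mathfrak{t}(\gamma):\gamma\text{ a length-}k\text{ walk from }e\}$. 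Dividing by $k$, the problem reduces to showing that the set of normalized weights $\{\tfrac1k\mathfrak{t}(\gamma)\}$ of length-$k$ walks from $e$ converges to $\operatorname{conv}\{\mathfrak{t}_n(c):c\text{ a simple cycle of }\CT\}$.

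One inclusion is easy: any walk $\gamma$ of length $k$ in the finite digraph $\CT$ decomposes, as an edge-multiset, into simple cycles $c$ occurring with multiplicities $n_c\ge0$ together with one leftover simple path $q_\gamma$ of length less than $|V(\CT)|$. By additivity, $\mathfrak{t}(\gamma)=\mathfrak{t}(q_\gamma)+\sum_c n_c\,\mathfrak{t}(c)$ and $k=|q_\gamma|+\sum_c n_c|c|$; since $q_\gamma$ ranges over a finite set, dividing by $k$ places $\tfrac1k\mathfrak{t}(\gamma)$ within $O(1/k)$ of the convex combination $\sum_c\tfrac{n_c|c|}{k}\,\mathfrak{t}_n(c)$. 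For the reverse inclusion, I would use that the transition matrix of $\varphi$ is irreducible — this is where the hypothesis on $f$ enters, since a reducible transition matrix would give an invariant proper subgraph of $\CT$, i.e. a proper subwedge of $\Gamma$ mapped into itself, exhibiting a proper free factor of $F_n$ carried into itself by $f$, which is excluded — so that $\CT$ is strongly connected. Then, given simple cycles $c_1,\dots,c_r$ and nonnegative rationals $\lambda_1,\dots,\lambda_r$ summing to $1$, one builds a length-$k$ walk from $e$ out of bridging paths of bounded total length joining $e$ to a vertex of each $c_j$, interspersed with roughly $\lambda_j k/|c_j|$ traversals of $c_j$; its normalized weight tends to $\sum_j\lambda_j\mathfrak{t}_n(c_j)$, and every length-$k$ walk from $e$ does occur in $\varphi^k(e)$. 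Such points become dense in $\operatorname{conv}\{\mathfrak{t}_n(c)\}$ as $k\to\infty$, which together with the first inclusion gives the claimed Hausdorff convergence; strong connectivity also makes the limit independent of the starting edge $e$. Finally, $\CT$ has only finitely many simple cycles and each $\mathfrak{t}_n(c)=\tfrac1{|c|}\mathfrak{t}(c)$ is a rational point of $H_1(F_n,\BR)$, so $\CS\varphi$ is a rational convex polytope, and the last assertion of the proposition holds by the very definition of $\CS\varphi$.

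The bookkeeping in the first two paragraphs is routine and closely parallels \cite{Had}. The main obstacle is the reverse inclusion in the last paragraph: one must know that the walks that actually appear in $\varphi^k(e)$ — which are \emph{all} length-$k$ walks starting at $e$ — have normalized weights that fill out the entire convex hull, and this relies essentially on strong connectivity of $\CT$, i.e. on the irreducibility of the transition matrix that the hypothesis on $f$ provides.
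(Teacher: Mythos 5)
The paper gives no proof of this proposition: it is dispatched with the remark that ``the same proof as the above theorem'' (i.e.\ the argument of \cite{Had} for $\CS f$) applies, so your write-up is a reconstruction rather than something that can be checked against the text line by line. As a reconstruction it is essentially right, and you have correctly isolated why this statement is much easier than the theorem about $\CS f$: since $\CS \varphi^k(p)$ is computed from the \emph{unreduced} path, every length-$k$ walk of $\CT$ starting at an edge of $p$ genuinely contributes a letter, no cancellation can occur, and everything collapses to the cycle decomposition of walks in a finite digraph plus the additivity of $\mathfrak{t}$ when $\sigma=I_n$. The forward inclusion, the $O(1)$ bookkeeping, and the rationality of the vertices are all fine as you state them.

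The one point to tighten is exactly the one you flag as the crux. Both the reverse inclusion and the independence of the limit from $p$ require that every simple cycle of $\CT$ be reachable in $\CT$ from every edge that a path with infinite $\varphi$-orbit can use; you obtain this from strong connectivity of $\CT$, which you in turn derive from full irreducibility of $f$. But the standing hypothesis here, inherited from Theorem \ref{theorem2}, is only that $f$ has a train track representative with Perron--Frobenius transition matrix, and that representative lives on some other graph: it is not automatic that the transition matrix of the rose representative $\varphi$ is irreducible. This is not an idle worry, because the automorphisms produced by the surface reduction (pseudo-Anosovs on subsurfaces with several boundary components) are typically \emph{not} fully irreducible, so the hypothesis cannot simply be upgraded. (The paper is itself loose on this point --- Proposition \ref{prop1} abruptly assumes $f$ fully irreducible --- so you are in good company.) A careful version of your argument should either deduce the needed reachability from the Perron--Frobenius hypothesis, or state the limit as the convex hull of $\mathfrak{t}_n(\gamma)$ over the simple cycles reachable from the edges of $p$; the rest of the paper only ever uses the ``Furthermore'' description of $\CS\varphi$ as a convex hull of cycle translations, so the weaker statement would suffice. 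Your subsidiary deduction that an invariant proper vertex set $S$ of $\CT$ forces $f(\langle S\rangle)=\langle S\rangle$ is correct, since a rank-$|S|$ free factor of $F_n$ contained in $\langle S\rangle$ must equal it.
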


The shadow $\CS f$ is calculated in a similar manner to the shadow $\CS \varphi$, but uses the convex hull of $\mathfrak{t}_n(\gamma)$ where $\gamma$ is a cycle in the train track transition graph of a train track representative of $f$ (see \cite{Had} for details). This convex hull was considered by Fried in \cite{Friedzeta} for the case of an automorphism coming from a pseudo-Anosov diffeomorphism, and in a slightly different language by Dowdall, Kapovich and Leininger (\cite{DLK}, \cite{DLK2}) and separately by Algom-Kfir, Hironaka and Rafi (\cite{YEK}) for the general $\tup{Out}(F_n)$ case. In both cases, this convex hull gives a cross-section of the dual cone to a  fibered cone (see the above references for definitions) which is a $n+1$ dimensional cone when $\sigma = I_n$. Thus, we have the following. 
\begin{prop} The polytope $\CS f$ is $n$ dimensional. In particular, it has at least $n+1$ vertices. 
\end{prop}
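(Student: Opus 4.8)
The plan is to exhibit $\CS f$ as the image, under an affine isomorphism, of a compact cross-section of a full-dimensional cone, and then to invoke the elementary fact that an $n$-dimensional convex polytope in $\BR^n$ has at least $n+1$ vertices. We use the standing hypothesis $\sigma = I_n$ throughout.

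Let $g\colon\Lambda\to\Lambda$ be a train track representative of $f$ with Perron--Frobenius transition matrix (so $\pi_1(\Lambda)\cong F_n$), and recall that $\CS f$ is the convex hull of the normalized translations $\mathfrak t_n(\gamma)$ of the cycles $\gamma$ in the transition graph of $g$. Let $X$ be the mapping torus of $g$, so $\pi_1(X)\cong F_n\rtimes_f\BZ$. Abelianizing $1\to F_n\to\pi_1(X)\to\BZ\to1$ and using that $\sigma=I_n$ (so conjugation by the stable letter $t$ acts trivially on $H_1(F_n)$ and imposes no new relations), we get $H_1(X,\BZ)\cong H_1(F_n,\BZ)\oplus\BZ\langle t\rangle\cong\BZ^{n+1}$, hence $H^1(X,\BR)\cong\BR^{n+1}$. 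Let $u_0\in H^1(X,\BZ)$ be the fibration class (projection onto the $\BZ\langle t\rangle$ factor), so $u_0(t)=1$ and $u_0|_{H_1(F_n)}=0$. A cycle $\gamma$ of length $k$ in the transition graph of $g$ closes up to a loop in $X$ positively transverse to the fibers, whose class $[\gamma]\in H_1(X,\BR)$ satisfies $u_0([\gamma])=k$ and whose image under the quotient $q\colon H_1(X,\BR)\to H_1(X,\BR)/\BR t\cong H_1(F_n,\BR)$ is, by the definitions of \S\ref{sec2.1}, the unnormalized translation $\mathfrak t(\gamma)$. Since $\BR t$ is transverse to the affine hyperplane $\{u_0=1\}$, the map $q$ restricts to an affine isomorphism $\{u_0=1\}\xrightarrow{\ \sim\ }H_1(F_n,\BR)$ carrying $\tfrac1k[\gamma]$ to $\mathfrak t_n(\gamma)$.

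Next I would invoke the description of the dual of the fibered cone: by Fried \cite{Friedzeta} when $f$ is realized by a pseudo-Anosov, and by Dowdall--Kapovich--Leininger \cite{DLK,DLK2} and Algom-Kfir--Hironaka--Rafi \cite{YEK} in general, the closed orbit classes $[\gamma]$ generate a cone whose closure $C\subset H_1(X,\BR)$ is dual to the open fibered cone $\mathcal A\subset H^1(X,\BR)$ containing $u_0$, and $\CS f=q\bigl(C\cap\{u_0=1\}\bigr)$. The cone $\mathcal A$ is an open, convex, pointed cone in $H^1(X,\BR)\cong\BR^{n+1}$, so it is $(n+1)$-dimensional; dually, $C$ is a full-dimensional, pointed, closed convex cone in $H_1(X,\BR)\cong\BR^{n+1}$. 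Since $u_0\in\mathcal A$, the functional $u_0$ is strictly positive on $C\setminus\{0\}$, so $C\cap\{u_0=1\}$ is a compact convex base of $C$, of dimension $(n+1)-1=n$. Transporting through the affine isomorphism $q|_{\{u_0=1\}}$ shows that $\CS f$ is an $n$-dimensional convex polytope in $H_1(F_n,\BR)\cong\BR^n$.

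Finally, the second assertion is pure convex geometry: an $n$-dimensional convex polytope $P\subset\BR^n$ affinely spans $\BR^n$ and equals the convex hull of its finitely many vertices, so its vertex set affinely spans $\BR^n$ and therefore has at least $n+1$ elements. The only non-formal ingredient in the argument is the full-dimensionality (and pointedness) of the fibered cone $\mathcal A$ --- equivalently, that the cone of homology directions of the suspension flow on $X$ has nonempty interior, together with the identity $b_1(X)=n+1$, which is exactly where $\sigma=I_n$ is used --- so that citation is the step I would expect to require the most care. Alternatively, one could argue full-dimensionality directly by producing cycles $\gamma_0,\dots,\gamma_n$ in the Perron--Frobenius transition graph whose homogenized translations $(\mathfrak t(\gamma_i),\length(\gamma_i))\in\BR^{n+1}$ are linearly independent, using strong connectivity; the cited results already package this.
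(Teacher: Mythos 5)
Your argument is correct and is essentially the paper's own: the paper likewise identifies $\CS f$ with a cross-section of the dual of the fibered cone of the mapping torus, cites Fried and Dowdall--Kapovich--Leininger / Algom-Kfir--Hironaka--Rafi for that identification, and uses $\sigma=I_n$ to see that the relevant cone lives in an $(n+1)$-dimensional space. You have merely spelled out the details (the computation $b_1(X)=n+1$ and the affine isomorphism $\{u_0=1\}\to H_1(F_n,\BR)$) that the paper leaves to the references.
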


\subsubsection{Group-like vertices} A vertex $v$ of $\CS \varphi$ will be called \emph{group-like} if it also a vertex of $\CS f$. These are the vertices that we will use in our proof. One technical problem we need to deal with is that vertices of $\CS f$ may not be vertices of $\CS \varphi$ at all. The following lemma deals with this problem. 

\begin{lemma} Suppose that $\sigma = I_n$. After perhaps replacing $f$ with $f^k$ for some number $k$, every vertex of $\CS f$ is a group-like vertex of $\CS \varphi$. 
\end{lemma}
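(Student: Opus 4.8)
The plan is to compare the polytopes $\CS f$ and $\CS\varphi$ directly: the preceding proposition describes $\CS\varphi$ as the convex hull of the normalized translations $\mathfrak t_n(\gamma)$ of simple cycles $\gamma$ of $\CT_\varphi$, and the shadow--limit theorem of \cite{Had} describes $\CS f$ as the analogous hull for a train--track representative. A power of $f$ will be used to convert the a priori only periodic extremal data of the train--track picture into data visible from the single map $\varphi$.

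First I would prove the inclusion $\CS f\subseteq\CS\varphi$. Fix $w\in F_n$ with infinite $f$--orbit, realized by a reduced loop $p$ at $\beta$. For every $k$, the reduced word $f^k(w)$ is obtained from the (generally non--reduced) path $\varphi^k(p)$ by cancelling backtracks, and cancelling a backtrack only deletes a vertex from the sequence of vertices traversed by the lift to $\wt\Gamma$. Hence every vertex of $\wt\Gamma$ met by the lift of $f^k(w)=\overline{\varphi^k(p)}$ is met by the lift of $\varphi^k(p)$, so $\CS f^k(w)\subseteq\CS\varphi^k(p)$ for all $k$; dividing by $k$ and taking Hausdorff limits (the left side tends to $\CS f$, the right to $\CS\varphi$, and inclusions pass to Hausdorff limits of closed sets) gives $\CS f\subseteq\CS\varphi$. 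Both are rational polytopes, and $\CS f$ is $n$--dimensional by the previous proposition, hence so is $\CS\varphi$.

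It now suffices to show, after replacing $f$ by a suitable power, that every vertex $v$ of $\CS f$ is a vertex of $\CS\varphi$; since $\CS\varphi=\mathrm{conv}\{\mathfrak t_n(\gamma)\}$, this amounts to (i) producing a simple cycle $\gamma_v$ of $\CT_\varphi$ with $\mathfrak t_n(\gamma_v)=v$, and (ii) checking that $\CS\varphi$ does not protrude past $v$ in the direction of a rational functional $\ell_v$ uniquely maximized over $\CS f$ at $v$. By the discussion preceding the lemma, $v=\mathfrak t_n(\gamma)$ for a simple cycle $\gamma$ of the transition graph $\CT_g$ of a train--track representative $g\colon\tau\to\tau$, and the extremal such cycles are exactly the ones recording the attracting fixed rays of appropriate powers of $f$ together with their asymptotic translation frequencies. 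Replace $f$ by a power $f^N$ for which all of these finitely many rays are genuinely fixed, with period--one edge itineraries, and choose the representative $\varphi$ of $f^N$ so as to respect this structure --- concretely, push $g^N$ onto the rose through the homotopy equivalence $\tau\to\Gamma$ and tighten, keeping the ray directions locked, so that $\varphi$ introduces no backtracking along any of these rays. Then the fixed itinerary of an extremal ray yields a fixed edge of $\Gamma$ occurring in $\varphi(\cdot)$ along a fixed cyclic pattern, i.e.\ a simple cycle $\gamma_v$ of $\CT_\varphi$ with $\mathfrak t_n(\gamma_v)=v$, settling (i); and since $\varphi$ does not backtrack along that ray, the lift of $\varphi^k$ of a test loop never reaches past the corresponding vertex of $\CS f^k$ in the direction $\ell_v$, which settles (ii). Hence $v$ is a vertex of $\CS\varphi$, i.e.\ group--like.

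The main obstacle is this last step: transferring the combinatorics of $g$ on the auxiliary graph $\tau$ to the combinatorics of $\varphi$ on the rose in a way that preserves the extremal directions, and verifying that after a power one can indeed choose $\varphi$ efficient along those directions. This is exactly where the machinery of \cite{Had} --- bounded cancellation and the convergence of rescaled homological shadows --- is needed, and it is the only place where the passage to a power is genuinely used, namely to turn the periodic expanding rays of $f$ into honest fixed rays that a single map $\varphi$ can exhibit as simple cycles of its transition graph.
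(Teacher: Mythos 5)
Your first step---the inclusion $\CS f\subseteq\CS\varphi$ obtained by noting that tightening $\varphi^k(p)$ only deletes vertices from the lifted path---is correct, and it matches what the paper uses (in the form $\CS\psi\subseteq k\,\CS\varphi$). The gap is in the second half. The whole difficulty of this lemma is not producing an extremal cycle realizing $v$; it is killing the cycles of the transition graph that \emph{overshoot} $v$. Since $\CS\varphi$ is the hull of $\mathfrak{t}_n(\gamma)$ over \emph{all} simple cycles $\gamma$ of $\CT_\varphi$, including cycles that track letters which eventually cancel in the reduced iterates, the inclusion $\CS f\subseteq\CS\varphi$ can be strict, and a vertex $v$ of $\CS f$ fails to be group-like exactly when some cycle $\gamma$ satisfies $\ell_v(\mathfrak{t}_n(\gamma))>\ell_v(v)$. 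Your justification of (ii)---``since $\varphi$ does not backtrack along that ray, the lift of $\varphi^k$ of a test loop never reaches past the corresponding vertex of $\CS f^k$ in the direction $\ell_v$''---only constrains the extremal rays themselves; it says nothing about the remaining cycles of the transition graph of the power, which are precisely where the overshoot comes from. Moreover, on the rose the representative of $f^N$ is essentially forced (each petal maps to the reduced word $f^N(e)$), so there is no freedom to ``choose $\varphi$ so as to respect this structure'': the backtracking of $\varphi^k$ relative to the tightened map is the phenomenon under study, not something you can legislate away by a choice of representative. You acknowledge this step as ``the main obstacle'' and defer it to the machinery of \cite{Had}, but that machinery does not by itself supply the missing argument.

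The paper closes this gap with a finiteness argument that your proposal lacks: for a dual functional $\omega$ maximized on $\CS f$ at $v$, there are only finitely many \emph{simple} cycles $\gamma_1,\dots,\gamma_s$ of $\CT_\varphi$ with $\omega(\mathfrak{t}_n(\gamma_i))>\omega|_v$, and any overshooting cycle must contain one of them as a subcycle. Taking $k>\max_i\length(\gamma_i)$ and passing to the tightened map $\psi$ of $f^k$, one argues that no cycle of $\CT_\psi$ contains any $\gamma_i$, hence $\omega(\mathfrak{t}_n(\gamma))\le k\,\omega|_v$ for every cycle $\gamma$ of $\CT_\psi$; and equality must be attained by some cycle, since otherwise $v$ would not be a vertex of $\CS f\subseteq\frac{1}{k}\CS\psi$. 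Note that this last observation also replaces your step (i): there is no need to transfer fixed rays from the train track $\tau$ to the rose. As written, your proposal correctly identifies the statement to be proved but does not prove its essential half.
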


\begin{proof} Notice first that for any $k$, $\CS f^k = k \CS f$. Let $\psi: (\Gamma, \beta) \to (\Gamma, \beta)$ be the continuous map associated to $f^k$. It is not necessarily true (and is most often false) that $\psi = \varphi^k$. We have that $\CS \psi \subseteq k \CS \varphi$, but equality does not need to hold. 

Let $F$ be a vertex of $\CS f$. Let $\omega \in H^1(F_n, \BR)$ be a dual element whose maximum in $\CS f$ is achieved at $F$. Let $\gamma_1, \ldots, \gamma_s$ be the set of all simple cycles in $\CT$ satisfying $\omega \big(\mathfrak{t}_n(\gamma_i) \big) > \omega|_F$. Any cycle $\gamma$ satisfying  $\omega \big(\mathfrak{t}_n(\gamma) \big) > \omega|_F$ has to contain such a cycle as a sub-cycle. 

Pick $k > \max_i \length (\gamma_i)$. Let $\psi$ be the map associated to $f^k$, and let $\CT_\psi$ be its associated graph. Any cycle of length $l$ in $\CT_\psi$ corresponds to a cycle of length $kl$ in $\CT$. No such cycle contains any of the $\gamma_i$'s. Thus, for any cycle $\gamma$ in $\CT_\psi$, we must have that $\omega(\mathfrak{t}_n(\gamma)) \leq k \omega|_F$. There must be at least one cycle where $\omega(\mathfrak{t}_n(\gamma)) = k \omega|_F$ (or else $F$ would not be a vertex of $\CS f$). Thus, $kF$ is a group-like vertex of $\CS \psi$. By choosing a sufficiently large value of $k$, this argument works for all vertices of $\CS f$.
\end{proof}

\subsubsection{Homological shadows and extremal subgraphs}  \label{subord}
\nid We assume once more that $\sigma = I_n$
\begin{definition} Let $F$ be a group-like vertex of $\CS \varphi$. Let $\CT_F \subset \CT$ be the union of all cycles $\gamma$ satisfying $\mathfrak{t}_n(\gamma) \in F$. 
\end{definition}

We call such a subgraph \emph{extremal}. Extremal subgraphs have nice properties, which are consequences of the following definitions, that are useful in the study of $\trace(A_\varphi)$.

\begin{definition}
Given $x = \sum a_h h \in \BZ[H]$, let $\tup{supp}(x) \subset H$ be the set of all $h$ such that $a_h \neq 0$. Given $x, y \in \BZ[H]$ we say that \emph{$x$ is subordinate to $y$} and write $x \preceq y$ if $y|_{\tup{supp}(x)} = x$. We say that $x$ and $y$ are \emph{separated} and write $x \| y$ if $\tup{supp}(x) \cap \tup{supp}(y) = \emptyset$.

\end{definition}

\begin{definition} Let $\CT' \subseteq \CT$ be a subgraph. For any $k$, let $t_k[\CT'] = \sum \mathfrak{s}(\gamma) \mathfrak{t}(\gamma)$ where the sum is taken over all cycles of length $k$ in $\CT'$. 
\end{definition}

\nid Note that we can calculate $t_k[\CT']$ in terms of $A_\varphi[\CT']$. Given $e \in E(\Gamma)$, let $x_e \in G$ be the vector with $1$ in the $e$-coordinate and $0$ in all the others. Then $$t_k[\CT'] = \sum_e x_e^T \big(A_\varphi[\CT'] \circ \sigma \big)^k x_e $$

\begin{definition} Let $\CT', \CT''$ be subgraphs of $\CT$. We say that $\CT'$ is subordinate to $\CT''$, and write $\CT' \preceq \CT''$ if for all sufficiently large $k$, $t_k[\CT'] \preceq t_k[\CT'']$. 

\end{definition}

\begin{definition} We say that two subgraphs $\CT', \CT''$ of $\CT$ are separated, and denote $\CT' \| \CT''$ if for all sufficiently large $k$, $t_k[\CT'] \tup{ } \| \tup{ } t_k[\CT'']$.
\end{definition}

\begin{definition} Let $\CT', \CT''$ be two subgraphs of $\CT$. If $\CT' \preceq \CT$, $\CT'' \preceq \CT$ and $\CT' \tup{ } \| \tup{ }\CT''$ then we write $\CT' \pitchfork \CT''$.  

\end{definition}

The first thing to note is the following 
 \begin{lemma} \label{pitchfork} If $v, w$ are two different disjoint group-like vertices of $\CS \varphi$ then $\CT_v \pitchfork \CT_w$.
 \end{lemma}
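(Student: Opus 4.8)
The plan is to show that the two conditions defining $\CT_v \pitchfork \CT_w$ both hold: that each $\CT_v, \CT_w$ is subordinate to $\CT$, and that $\CT_v$ and $\CT_w$ are separated. For the first, the key point is that for a group-like vertex $v$, every cycle $\gamma$ in $\CT$ with $\mathfrak{t}_n(\gamma)\notin v$ satisfies, for a suitable dual functional $\omega$ with $\omega|_v$ maximal on $\CS\varphi$, the strict inequality $\omega(\mathfrak t_n(\gamma)) < \omega|_v$; hence for large $k$ the cycles of length $k$ lying in $\CT_v$ contribute terms $\mathfrak t(\gamma)$ whose $\omega$-value is exactly $k\,\omega|_v$, while all other length-$k$ cycles in $\CT$ contribute terms with strictly smaller $\omega$-value. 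So no cancellation can occur between the ``$v$-extremal'' terms of $t_k[\CT]$ and the rest, giving $t_k[\CT_v]\preceq t_k[\CT]$ for all large $k$, i.e. $\CT_v\preceq\CT$. (One must check that terms of $t_k[\CT_v]$ do not cancel among themselves against terms of $t_k[\CT]$ — but those are the same terms, so subordinacy is automatic once we know the $v$-part of $t_k[\CT]$ is isolated by $\omega$.) The same argument applies to $w$.

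For the separation $\CT_v \| \CT_w$: since $v\neq w$ are distinct vertices of the polytope $\CS\varphi$, pick a linear functional $\omega$ with $\omega|_v > \omega|_w$ and with $v$ the unique maximizer. Then for large $k$, every term appearing in $t_k[\CT_v]$ is of the form $\mathfrak t(\gamma)$ with $\omega(\mathfrak t(\gamma)) = k\,\omega|_v$, whereas every term in $t_k[\CT_w]$ has $\omega$-value $k\,\omega|_w < k\,\omega|_v$. Therefore $\tup{supp}(t_k[\CT_v])$ and $\tup{supp}(t_k[\CT_w])$ live in disjoint ``slices'' of $H$ (distinguished by the value of $\omega$), so they are disjoint, which is exactly $t_k[\CT_v]\,\|\,t_k[\CT_w]$ for all large $k$. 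Combining the two parts gives $\CT_v\pitchfork\CT_w$ by definition.

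I expect the main obstacle to be the careful handling of the translations $\mathfrak t(\gamma)$ versus the normalized translations $\mathfrak t_n(\gamma) = \frac1k\mathfrak t(\gamma)$, together with the based-cycle convention the paper emphasizes: a priori a single normalized translation value $\mathfrak t_n(\gamma)$ does not uniquely determine $\mathfrak t(\gamma)$ as an element of $H$, and different based cycles of the same length lying ``at $v$'' could contribute the same monomial with opposite signs and cancel. I would need the convergence statement $\frac1k\CS\varphi^k p \to \CS\varphi$ (and the description of $\CS\varphi$ as the convex hull of the $\mathfrak t_n(\gamma)$ over simple cycles $\gamma$) to guarantee that for all large $k$ the only length-$k$ cycles contributing to the $\omega$-extremal part are precisely those in $\CT_v$, so that the ``$v$-part'' of $t_k[\CT]$ equals $t_k[\CT_v]$ exactly — making subordinacy hold on the nose rather than merely up to sign. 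The separation claim is comparatively robust, since it only needs the $\omega$-values of the two supports to be disjoint, which follows from $v\neq w$ regardless of any internal cancellation. So the bulk of the work is verifying that $\omega$ genuinely isolates the $v$-extremal terms of $t_k[\CT]$ for all sufficiently large $k$, using the shadow convergence theorem together with the finiteness of the set of simple cycles in $\CT$.
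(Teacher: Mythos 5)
Your overall strategy --- isolate the contribution of $\CT_v$ to $t_k[\CT]$ by means of a supporting linear functional $\omega$ whose maximum on $\CS \varphi$ is attained uniquely at $v$ --- is the same as the paper's, and your treatment of separation and of internal cancellation is fine. But there is a genuine gap at the crux, namely the sentence ``hence for large $k$ the cycles of length $k$ lying in $\CT_v$ contribute terms $\mathfrak{t}(\gamma)$ whose $\omega$-value is exactly $k\,\omega|_v$.'' This does not follow from the preceding sentence. What you established is that a cycle with $\mathfrak{t}_n(\gamma)\notin v$ has $\omega(\mathfrak{t}_n(\gamma))<\omega|_v$; what you need is that a cycle \emph{contained in the subgraph} $\CT_v$ has $\mathfrak{t}_n(\gamma)=v$. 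These are not the same statement: $\CT_v$ is defined as the \emph{union} of the cycles with $\mathfrak{t}_n\in v$, and a length-$k$ cycle whose edges all lie in that union need not be one of those cycles, nor a concatenation of them, so a priori it could satisfy $\omega(\mathfrak{t}_n(\gamma))<\omega|_v$ (it cannot exceed it, since $\mathfrak{t}_n(\gamma)\in\CS\varphi$). If such parasitic cycles existed, $t_k[\CT_v]$ would contain terms of non-extremal $\omega$-value, and both your subordinacy and your separation arguments would break, since those terms could collide and cancel against terms of $t_k[\CT]$ coming from cycles outside $\CT_v$, or against terms of $t_k[\CT_w]$.

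You do flag that ``the bulk of the work is verifying that $\omega$ genuinely isolates the $v$-extremal terms,'' but the tool you propose is not the right one: the shadow convergence theorem controls $\frac{1}{k}\CS\varphi^k p$ and does not by itself exclude parasitic cycles inside $\CT_v$. The paper closes this gap with a direct convexity argument: given a cycle $\gamma'=e_1\ldots e_r\subset\CT_v$, each edge $e_i$ lies on some defining (extremal) cycle of $\CT_v$, so one chooses a path $\eta_i$ in $\CT_v$ closing $e_i$ into a cycle $e_i\eta_i$ with $\omega(\mathfrak{t}(e_i\eta_i))=k_i\,\omega(v)$; setting $\tau=\omega\circ\mathfrak{t}$ and using additivity of $\tau$ under concatenation, the assumption $\tau(\gamma')<k\,\omega(v)$ forces the concatenated cycle $\eta=\eta_1\ldots\eta_r$ to satisfy $\tau(\eta)>\length(\eta)\,\omega(v)$, contradicting the maximality of $\omega$ on $\CS\varphi$. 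Some argument of this kind must be supplied; once it is, the rest of what you wrote goes through.
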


\begin{proof} 

\nid The result follows directly from the following claim: let $\gamma \subset \Gamma$ be a loop of length $k$, and let $u$ be a group-like face Then $\mathfrak{t}(\gamma) \in ku \iff \gamma \subset \CT_u$. 

The  if direction of this claim follows from the definition of $\CT_u$. To see the only if direction, suppose $\gamma' \subset \CT_u$ is a cycle of length $k$. Let $\omega \in H^1(F_n, \BR) \to \BR$ be a cohomology class whose maximum on $\CS \varphi$ is achieved at the face $u$, and which is constant on the face (such a class exists, because $u$ is a face of the convex polytope $\CS \varphi$). By maximality, $\omega(\mathfrak{t}(\gamma')) \leq k \omega(u)$. 

Suppose that $\omega(\mathfrak{t}(\gamma')) < k \omega(u)$. Write $\gamma' = e_1 \ldots e_r$. Since each edge of $\gamma'$, there are paths $\eta_1, \ldots, \eta_r$ in $\CT_u$ of lengths $k_1 -1, \ldots, k_r - 1$ such that for each $i$, $e_i \eta_i$ is a cycle and $\omega(\mathfrak{t}(e_i \eta_i)) = k_i \omega(u)$. Let $\eta$ be the cycle $\eta_1 \ldots \eta_r$. Let $\tau = \omega \circ \mathfrak{t}$. Then: 

$$\tau(\eta) = \sum_i \tau(\eta_i) = \sum_i [k_i \omega(u) - \tau(e_i)] = \omega(u) \sum_i k_i  - \omega(\gamma')$$

Notice that $\omega(u) \sum_i k_i = \length(\eta) \omega(u) + k \omega(u)$, and that $k \omega(u) - \tau(\gamma') \geq 0$. Thus, $\tau(\eta) > \length(\eta) \omega(u)$, which is a contradiction to the definition of $\omega$. Thus, $\tau(\gamma') = \length(\gamma') \omega(u)$.

\end{proof}

\nid Extremal subgraphs behave quite well with respect to covers.  

\begin{observation} If $\CT_v, \CT_w$ are extremal subgraphs corresponding to different verices, $\pi: \Gamma_0 \to \Gamma$ is a finite cover, and $\CT'_v, \CT'_w$ are the lifts of $\CT_v, \CT_w$ respectively, then $\CT'_v \pitchfork \CT'_w$. 
\end{observation}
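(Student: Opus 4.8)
The plan is to deduce this from the proof of Lemma~\ref{pitchfork} together with the compatibility of the covering $\CT_0 \to \CT$ with the edge labels $\mathfrak{s}$, $\mathfrak{t}$. Write $p\colon \CT_0 \to \CT$ for the induced covering of digraphs (it exists and respects edge directions, as recalled in \S\ref{sec2.1}) and $\mu = \pi_*\colon H_1(\Gamma_0) \to H_1(\Gamma)$ for the homology pushforward. The first point to establish is that for every cycle $\delta$ of length $k$ in $\CT_0$, the image $p(\delta)$ is a cycle of length $k$ in $\CT$ and $\mu(\mathfrak{t}(\delta)) = \mathfrak{t}(p(\delta))$; this comes out of unwinding the construction of $A_{\varphi_0}$ from $A_\varphi$ in \S\ref{sec2.1}, using that $\varphi_0$ lifts $\varphi$ and that $\mu$ is natural, hence intertwines the twists $\sigma_0$ and $\sigma = I_n$ (the $f$-action on $H_1(\Gamma)$ being trivial). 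I expect this bookkeeping to be the only real obstacle, and it is routine.

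Granting it, the core of the argument is a dichotomy for cycles. Since $\CT'_v = p^{-1}(\CT_v)$, a cycle $\delta$ of $\CT_0$ lies in $\CT'_v$ precisely when $p(\delta) \subseteq \CT_v$; and the claim proved inside Lemma~\ref{pitchfork} says that a cycle $\bar\delta$ of length $k$ in $\CT$ satisfies $\bar\delta \subseteq \CT_u \iff \mathfrak{t}(\bar\delta) = ku$ in $H_1(\Gamma, \BR)$, for any group-like vertex $u$ (and $v$, $w$ are group-like, extremal subgraphs being defined only for such). Combining these with the compatibility above, a length-$k$ cycle $\delta$ of $\CT_0$ lies in $\CT'_v$ if and only if $\mu(\mathfrak{t}(\delta)) = kv$. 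Consequently $\tup{supp}(t_k[\CT'_v]) \subseteq (\mu\otimes\BR)^{-1}(kv)$, and similarly for $w$; these are now whole fibers of $\mu$ rather than the single points appearing in Lemma~\ref{pitchfork}, but only their disjointness is needed.

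The three assertions making up $\CT'_v \pitchfork \CT'_w$ --- now interpreted relative to the ambient transition graph $\CT_0$ --- then follow at once. For $\CT'_v \| \CT'_w$: the fibers $(\mu\otimes\BR)^{-1}(kv)$ and $(\mu\otimes\BR)^{-1}(kw)$ are disjoint since $v \ne w$, so $\tup{supp}(t_k[\CT'_v]) \cap \tup{supp}(t_k[\CT'_w]) = \emptyset$ for every $k$. For $\CT'_v \preceq \CT_0$: if $h \in \tup{supp}(t_k[\CT'_v])$ then $\mu(h) = kv$, so every length-$k$ cycle $\delta$ of $\CT_0$ with $\mathfrak{t}(\delta) = h$ already lies in $\CT'_v$; hence the coefficient of $h$ in $t_k[\CT_0]$ equals its coefficient in $t_k[\CT'_v]$, i.e. $t_k[\CT_0]\big|_{\tup{supp}(t_k[\CT'_v])} = t_k[\CT'_v]$. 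The same reasoning gives $\CT'_w \preceq \CT_0$, completing the proof; note that the cover need not be regular.
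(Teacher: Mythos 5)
Your argument is correct and is exactly the justification the paper intends but leaves unwritten: the observation rests on the cycle characterization $\gamma \subseteq \CT_u \iff \mathfrak{t}(\gamma) = ku$ established inside the proof of Lemma~\ref{pitchfork}, transported to the cover via the compatibility $\pi_*(\mathfrak{t}(\delta)) = \mathfrak{t}(p(\delta))$. Your handling of the two directions of that equivalence (the definitional one for $\CT'_v \preceq \CT_0$, the harder one for $\CT'_v \, \| \, \CT'_w$) and of the twist $\sigma_0$ is accurate, so nothing is missing.
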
 

\subsubsection{Enfeoffed vertices}
\nid The reason we care about extremal subgraphs is that they allow us to break the study of $\trace(A_\varphi)$ into simple pieces. This idea is embodied in the following definition and observations. 

\
\begin{definition} Let $\CT_v \preceq \CT$ be an extremal subgraph. Let $\sigma$ be the action of $\varphi$ on $H_1(\Gamma)$. Suppose that $\sigma$ has finite order. We say that $\CT'$ is \emph{enfeoffed in } $\Gamma$ if $A_{\varphi}[\CT_v, \pi] \circ \sigma$ is not nilpotent map (note - the word enfeoffed is pronounced in-FEEF-d, and means to be granted a feudal fief. Here we use it to denote the intuition that if $\CT_v$ is enfeoffed in $\Gamma_0$, then $\CT_v$ is responsible for a portion of the support of  $\trace(A_{\varphi_0})$). Indeed, by replacing $f$ with a power of itself that acts trivially on homology, we have that $\CT_v$ is enfeoffed if and only if for infinitely many $i$, the support of $\trace(A^i_{\varphi_0})$ intersects $iv$. 
\end{definition}

\begin{observation} If $\CT_v$ is an enfeoffed extremal graph, and $\CT_v'$ is the lift of $\CT_v$ to some cover to which $\varphi$ lifts, then $\CT_v'$ is also enfeoffed. 
\end{observation}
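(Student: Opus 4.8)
The plan is to strip ``enfeoffed'' down to a statement about the ordinary (untwisted) signed adjacency matrix of $\CT_v$, for which passage to covers is transparent. Write $\pi_1\colon\Gamma_1\to\Gamma$ for the cover in question, $\varphi_1$ for the chosen lift of $\varphi$, $\CT_1=\CT_{\varphi_1}$ for its associated directed graph (a directed cover of $\CT$), and $\CT_v'=\pi_1^{-1}(\CT_v)\subseteq\CT_1$ for the lift of $\CT_v$. The condition that $\CT_v'$ be enfeoffed in $\Gamma_1$, i.e.\ that $A_\varphi[\CT_v,\pi_1]\circ\sigma_1$ be non-nilpotent, is defined once $\sigma_1$ — the action of $\varphi_1$ on $H_1(\Gamma_1)$ — has finite order; we may assume this, for otherwise $\varphi_1$ already acts with infinite order on $H_1(\Gamma_1)$ and there is nothing to prove.

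The key reformulation I would establish is: for any finite cover $\pi_0\colon\Gamma_0\to\Gamma$ to which $\varphi$ lifts, letting $B_0$ be the ordinary signed adjacency matrix of $\pi_0^{-1}(\CT_v)$ — equivalently, the image of $A_\varphi[\CT_v,\pi_0]$ under the ring homomorphism $\BZ[H_1(\Gamma_0)]\to\BC$ specializing at the trivial character — one has that $B_0$ non-nilpotent $\Longrightarrow$ $\CT_v$ enfeoffed in $\Gamma_0$, and for $\Gamma_0=\Gamma$ the converse holds as well. For the implication: that specialization kills the twist $\sigma_0$ (being constant on group elements) and sends $A_\varphi[\CT_v,\pi_0]\circ\sigma_0$ to $B_0$; since a ring homomorphism carries nilpotents to nilpotents, non-nilpotence of $B_0$ forces non-nilpotence of $A_\varphi[\CT_v,\pi_0]\circ\sigma_0$. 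For the converse at $\Gamma_0=\Gamma$ — this is where extremality does the work — the claim proved inside Lemma~\ref{pitchfork} shows that every cycle of length $k$ in $\CT_v$ has translation exactly $kv\in H$, so $t_k[\CT_v]=\trace\big((A_\varphi[\CT_v]\circ\sigma)^k\big)$ equals $\trace(B_\Gamma^k)\cdot[kv]$, a single monomial; hence $A_\varphi[\CT_v]\circ\sigma$ being nilpotent over $\BZ[H]$ is equivalent (Cayley--Hamilton over the commutative ring $\BZ[H]$, together with the standard characterization of nilpotence by vanishing of all power sums in characteristic $0$) to $B_\Gamma$ being nilpotent.

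It then remains only to prove the combinatorial core: if the signed adjacency matrix $B_\Gamma$ of $\CT_v$ is non-nilpotent, so is the signed adjacency matrix $B_1$ of $\CT_v'=\pi_1^{-1}(\CT_v)$. Restricting the covering $\CT_1\to\CT$ over the subgraph $\CT_v$ gives a covering of directed graphs $\CT_v'\to\CT_v$; hence the pull-back map $\BC^{V(\CT_v)}\hookrightarrow\BC^{V(\CT_v')}$ (compose a function with the projection) is injective and intertwines $B_\Gamma$ with $B_1$, since at a vertex lying over $e$ the out-edges of the cover biject sign-preservingly with the out-edges of $e$. Therefore $\det(tI-B_\Gamma)$ divides $\det(tI-B_1)$, so a non-zero eigenvalue of $B_\Gamma$ is a non-zero eigenvalue of $B_1$. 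Combining this with the reformulation — the ``$\Longleftarrow$'' at $\Gamma$ to extract non-nilpotence of $B_\Gamma$, the cover step, and the ``$\Longrightarrow$'' at $\Gamma_1$ — finishes the proof.

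I expect the middle paragraph to be the real content: recognizing that at an extremal vertex the $\BZ[H]$-module subtleties of the Alexander-type matrix $A_\varphi[\CT_v]$ collapse — because, by Lemma~\ref{pitchfork}, all extremal cycles carry one and the same translation — so that enfeoffment is governed by an honest integer matrix. Once that is in hand, the cover step is the routine ``a covering produces a pull-back invariant subspace, hence a characteristic-polynomial divisibility'' argument. The remaining care points are cosmetic: pinning down the (lightly abbreviated) definition of ``enfeoffed'' consistently for $\Gamma$ and for $\Gamma_1$, and noting that we lose nothing by assuming $\varphi_1$ acts with finite order on $H_1(\Gamma_1)$.
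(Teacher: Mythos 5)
Your proof is correct. The paper offers no argument for this observation at all (it is stated as immediate), so there is nothing to compare line by line; what you have written is a complete justification of a step the paper leaves to the reader. Your two reductions are exactly the right ones: (i) at an extremal \emph{vertex} subgraph the claim inside Lemma \ref{pitchfork} forces every length-$k$ cycle of $\CT_v$ to carry the single translation $kv$, so $t_k[\CT_v]=\trace(B_\Gamma^k)\cdot[kv]$ and non-nilpotence of $A_\varphi[\CT_v]\circ\sigma$ over the domain $\BZ[H]$ (characteristic $0$, so nilpotence is detected by power traces) is equivalent to non-nilpotence of the untwisted signed adjacency matrix $B_\Gamma$; and (ii) specialization at the trivial character is a ring homomorphism that turns non-nilpotence of $B_1$ back into non-nilpotence of the twisted matrix upstairs, so only the untwisted statement needs to survive the cover. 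The covering step itself is the standard one: since $\CT_v'$ is the full preimage of $\CT_v$ and $\varphi_1$ lifts $\varphi$, out-edges (with signs, for lifted orientations) biject over each vertex, so $p^*$ (or, with the paper's row/column convention, $p_*$ -- either gives an invariant subspace or quotient, and nilpotence is transpose-invariant) intertwines $B_\Gamma$ with $B_1$ and yields the characteristic-polynomial divisibility. Your reduction modulo the case that $\sigma_1$ has infinite order is also consistent with how the observation is actually used in the paper. One could alternatively argue directly that $\trace(B_1^k)$ counts signed closed lifts of cycles of $\CT_v$, but that count is not obviously nonzero term by term, so your eigenvalue-inclusion argument is the cleaner route.
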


\begin{observation} Suppose $\sigma$ has finite order, and that $\CT_v$ is an enfeoffed extremal subgraph. Then there exists a number $m$ such that $\sigma^m = Id$, and for any $k$, $\mathfrak{t}_{km}[\CT_v] \neq 0$. Thus, each such subgraph $\CT_v$ contributes terms to the support of  $A_{\varphi^k}$ for infinitely many $k$. In particular, if $\CT_1, \CT_2, \ldots, \CT_r$ are enfeoffed, pairwise separated extremal subgraphs, then for some $i$, such that $\sigma^i = I$, we have that  $\|\trace(A_{\varphi^i})\| \geq r$. 
\end{observation}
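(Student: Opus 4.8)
I would reduce the whole statement to a fact about a single integer matrix attached to $\CT_v$, and then invoke the Skolem--Mahler--Lech theorem. Here $\mathfrak{t}_{km}[\CT_v]$ means $t_{km}[\CT_v]$, the element defined by $t_k[\CT']=\sum\mathfrak{s}(\gamma)\mathfrak{t}(\gamma)$ over length-$k$ cycles.

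\textbf{Step 1: extremal subgraphs give single-term contributions.} Since $v$ is a group-like vertex, the claim proved inside Lemma \ref{pitchfork} (with $u=v$) says that a cycle of length $k$ is contained in $\CT_v$ if and only if $\mathfrak{t}(\gamma)=kv$. In particular $\CT_v$ has no cycles of length $k$ unless $kv\in H$, and when $kv\in H$ every length-$k$ cycle in $\CT_v$ contributes to the \emph{same} group element $kv$. Hence $t_k[\CT_v]$ is either $0$ or equals $a_k\cdot(kv)$, where $a_k=\sum_\gamma\mathfrak{s}(\gamma)\in\BZ$ (sum over length-$k$ cycles in $\CT_v$), and $a_k=\trace(M_v^k)$ for the signed transition matrix $M_v$ of $\CT_v$, i.e. the square integer matrix indexed by $E(\Gamma)$ with $(M_v)_{e,e'}=\sum_{\eta=(e,e')\in E(\CT_v)}\mathfrak{s}(\eta)$ (this is $A_\varphi[\CT_v]$ with every translation set to $1$). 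Using $t_k[\CT']=\sum_e x_e^T(A_\varphi[\CT']\circ\sigma)^k x_e$, the finiteness of the order $m_0$ of $\sigma$, and the fact that a square matrix over a characteristic-zero field is nilpotent iff all power sums of its eigenvalues vanish, one checks that ``$\CT_v$ enfeoffed'' (that is, $A_\varphi[\CT_v]\circ\sigma$ not nilpotent) is equivalent to ``$a_k\neq 0$ for some $k$'', hence to $M_v$ being a non-nilpotent integer matrix. Moreover, whenever $\sigma^i=\Id$ we have $\trace(A_{\varphi^i})=t_i[\CT]$, and its restriction to the group element $iv$ is exactly $a_i\cdot(iv)=t_i[\CT_v]$ (again by the claim in Lemma \ref{pitchfork}); so $t_i[\CT_v]\neq 0$ forces $iv\in\tup{supp}\,\trace(A_{\varphi^i})$.

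\textbf{Step 2: synchronizing via Skolem--Mahler--Lech.} Write $a_k=\sum_j\mu_j^k$, where $\mu_1,\dots,\mu_s$ ($s\geq 1$) are the nonzero eigenvalues of $M_v$ counted with multiplicity; this multiset is $\mathrm{Gal}(\overline{\BQ}/\BQ)$-stable. Let $N$ be the least common multiple of the (finitely many, bounded) orders of those quotients $\mu_i/\mu_j$ which are roots of unity. Grouping equal values among $\{\mu_j^N\}$, the sequence $\ell\mapsto a_{N\ell}=\sum_b c_b\lambda_b^{\ell}$ has distinct characteristic roots $\lambda_b$ with $c_b\in\BZ_{>0}$, and by the choice of $N$ no quotient $\lambda_b/\lambda_{b'}$ ($b\neq b'$) is a root of unity: it is a \emph{non-degenerate} linear recurrence sequence over $\BZ$. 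By the Skolem--Mahler--Lech theorem, a non-degenerate linear recurrence sequence has only finitely many zeros; so $a_{N\ell}\neq 0$ for all $\ell\geq L_0$, some $L_0$. Now let $q$ be the smallest positive integer with $qv\in H$ and set $m:=\mathrm{lcm}(N,m_0,q)\cdot(L_0+1)$. Then $\sigma^m=\Id$, $mkv\in H$ for every $k$, and $a_{mk}\neq 0$ for every $k\geq 1$ (since $mk$ is a multiple of $N$ of the form $N\ell$ with $\ell\geq L_0+1$), whence $t_{mk}[\CT_v]=a_{mk}\cdot(mkv)\neq 0$ for all $k\geq 1$. This proves the first two assertions. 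For the last one, apply the above to each of $\CT_1=\CT_{v_1},\dots,\CT_r=\CT_{v_r}$, obtaining exponents $m_1,\dots,m_r$; let $i$ be any common multiple of $m_0,m_1,\dots,m_r$. Then $\sigma^i=\Id$ and $t_i[\CT_j]\neq 0$ for every $j$; since the $\CT_j$ are pairwise separated the points $iv_1,\dots,iv_r$ are pairwise distinct, each lies in $\tup{supp}\,\trace(A_{\varphi^i})$ with nonzero coefficient, so $\trace(A_{\varphi^i})$ has at least $r$ terms and $\|\trace(A_{\varphi^i})\|\geq r$.

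\textbf{Main obstacle.} The only non-routine point is passing from ``$a_k\neq 0$ for some $k$'' to ``$a_{mk}\neq 0$ for all $k\geq 1$ for a single fixed $m$'': a nonzero but finite power sum of algebraic integers can vanish at isolated exponents, and synchronizing the $r$ sequences requires that each ``good'' exponent set eventually contains a full arithmetic progression. This is exactly what the non-degenerate case of Skolem--Mahler--Lech supplies, once root-of-unity quotients are killed by raising to the power $N$. The remaining bookkeeping (absorbing $m_0$, $q$, and the finitely many exceptional exponents, and adding up norms over separated supports) is straightforward.
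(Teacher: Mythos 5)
Your proof is correct, and it supplies an argument where the paper gives none: the Observation is stated without proof, and the only justification the surrounding text offers (the Vandermonde-type remark after the definition of enfeoffment, echoed in Lemma \ref{nilpup}) yields only that $t_k[\CT_v]\neq 0$ for \emph{infinitely many} $k$ --- which is not enough to synchronize the $r$ subgraphs in the final sentence, since $r$ infinite sets need not share a common element. You correctly isolate this as the crux. Your Step 1 reduction is sound: the claim inside Lemma \ref{pitchfork} shows every length-$k$ cycle of $\CT_v$ has translation exactly $kv$, so $t_k[\CT_v]=a_k\cdot(kv)$ with $a_k=\trace(M_v^k)$ an integer power-sum sequence, non-nilpotency of $A_\varphi[\CT_v]$ is equivalent to non-nilpotency of $M_v$ (Newton's identities over the fraction field of $\BZ[H]$), and subordination of $\CT_v$ to $\CT$ guarantees the $iv_j$ survive into $\tup{supp}\,\trace(A_{\varphi^i})$ without cancellation. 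Step 2 --- killing root-of-unity ratios by passing to $N$-th powers so that $\ell\mapsto a_{N\ell}$ becomes a non-degenerate recurrence with strictly positive coefficients, then invoking the non-degenerate case of Skolem--Mahler--Lech to get cofinitely many nonzero terms, and finally taking $m$ a large enough multiple of $\mathrm{lcm}(N,m_0,q)$ --- delivers exactly the arithmetic-progression statement claimed, and the lcm of the resulting $m_j$ handles the last assertion. Two minor caveats, neither fatal: the equivalence in Step 1 is cleanest under the standing assumption $\sigma=I_n$ of Section \ref{subord} (the paper's own hypothesis ``$\sigma$ of finite order'' is slightly out of step with its definition of extremal subgraphs, and your ``one checks'' for general finite-order $\sigma$ deserves a line via $(A_\varphi[\CT_v]\circ\sigma)^{m_0}$); and the final bound $\|\trace(A_{\varphi^i})\|\geq r$ requires reading $\|\cdot\|$ as the $\ell^1$-norm (or term count), as you implicitly do --- for the $\ell^2$-norm one would only get $\sqrt{r}$, which still suffices for the paper's application via Corollary \ref{cor1}.
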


\subsection{Enfeoffing vertex subgraphs}

Our goal in this section is to prove the following proposition.  

\begin{prop}\label{prop3} In the notation above, suppose that $\sigma = I_n$. Then for any group-like vertex $v$ of $\CS \varphi$, there  exists a finite cover $\pi_0: \Gamma_0 \to \Gamma$ such that the lift of $\CT_v$ in $\Gamma_0$ in enfeoffed, or a finite cover to which $\varphi$ lifts where its homological action has infinite order. 
\end{prop}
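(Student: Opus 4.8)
The plan is to implement Step 3 of the sketch: given a group-like vertex $v$ of $\CS\varphi$, build a tower of covers along which the ``complexity of cancellation'' at $v$ strictly decreases, ending at a cover where $\CT_v$ is enfeoffed (unless along the way we stumble onto a cover with infinite order homological action, in which case we are already done). First I would set up the complexity invariant. For a normal cover $\Gamma_0 \to \Gamma$ with deck group a quotient of $F_n/[F_n,F_n]$-type (more precisely corresponding to a term of the lower central series), define the \emph{$v$-part of $f^k(w)$ in $\Gamma_0$} by lifting $f^k(w)$, viewing the result in $\BZ[\calD]^{E(\Gamma)}$, and discarding every summand supported at a deck element not projecting to $kv$ under $\calD \to H \otimes \BR$. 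Say $v$ is \emph{$\Gamma_0$-enfeoffed} if this $v$-part is nonzero for some $w$ and infinitely many $k$. Letting $R_1, R_2, \ldots$ be the free $i$-step nilpotent covers of $\Gamma = \bigvee_1^n S^1$, I would invoke Lemma \ref{nilenf} to produce some $i$ with $v$ being $R_i$-enfeoffed; call the least such $i$ the \emph{nilpotent level} of $v$. The point is that the level is finite: this is exactly the homological-shadow argument from \cite{Had}, applied now to the extremal subgraph $\CT_v$ — since $v$ is a genuine vertex of the $n$-dimensional polytope $\CS f$, the cycles in $\CT_v$ cannot all cancel once one passes deep enough into the lower central series, because deep enough in the lower central series the shadow data (which records through which vertices of the nilpotent grid a lift passes) becomes fine enough to separate the contributions that project to $kv$.

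Next I would run the downward induction on the nilpotent level. If the level is $1$, then being $R_1$-enfeoffed is by definition being enfeoffed in the abelian cover $R_1$ (the relevant summands survive in $\BZ[H]$), so $\CT_v$ — or rather its lift to $R_1$ — is enfeoffed and we are done. If the level $i$ is at least $2$, I would apply Lemma \ref{nilpup}: given that $v$ is $i^{th}$-level nilpotent enfeoffed, there is a finite \emph{abelian} cover of $R_i$ (equivalently, a finite cover of $\Gamma$ lying between $R_i$ and some further abelian extension) in which $v$ becomes $j^{th}$-level nilpotent enfeoffed for some $j < i$. Here the mechanism is Fourier-analytic: the obstruction to the $v$-part surviving in $R_{i-1}$ is that certain sums over the deck group vanish; passing to an abelian cover replaces these by their specializations $\psi(\cdot)$ at characters $\psi$ of the relevant quotient, and by Proposition \ref{fourier} (the $L^2$-isometry of the Fourier transform, together with Parseval/multiplicativity) one can choose $\psi$ with finite image making the specialized $v$-part nonzero for infinitely many $k$ — essentially the same ``there is a character where the norm is realized'' move used in the proof of Lemma \ref{lemma2.1}. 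One must check that the extremal/separation bookkeeping (Lemma \ref{pitchfork} and the observations following it) is preserved under these covers, so that the $v$-part is not contaminated by cycles associated to other vertices; this follows because $\CT_v \pitchfork \CT_w$ for $w \ne v$ lifts to covers.

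Iterating, after at most $i-1$ steps I arrive at a finite cover $\Gamma_0 \to \Gamma$ (a composite of the nilpotent cover $R_i$ with finitely many abelian covers, hence still solvable and still one to which a power of $\varphi$ lifts) in which $v$ is $1^{st}$-level nilpotent enfeoffed, i.e. $\CT_v$ is enfeoffed in $\Gamma_0$ in the sense of the definition preceding Proposition \ref{prop3}: $A_\varphi[\CT_v, \pi_0]\circ\sigma$ is not nilpotent. Throughout, at each stage I reserve the escape hatch: if at any cover the homological action of (a power of) $\varphi$ already has infinite order, Proposition \ref{prop1} finishes the argument immediately, so I only continue the induction when the action is (virtually) trivial on homology, which is the hypothesis under which the enfeoffment bookkeeping is clean.

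\textbf{Main obstacle.} The hard part is Lemma \ref{nilpup}, the inductive descent step: controlling precisely which Fourier characters $\psi$ simultaneously (a) have finite image, (b) make the specialized $v$-part nonzero for infinitely many $k$ rather than for only finitely many (one needs an argument that the set of ``bad'' $k$ cannot be cofinite, likely via a polynomial-growth or recurrence argument on $t_{km}[\CT_v]$ using that $\sigma$ has finite order), and (c) do not accidentally kill the separation from the other extremal subgraphs. Establishing finiteness of the nilpotent level (Lemma \ref{nilenf}) is conceptually the crux but is handled by the shadow machinery of \cite{Had}; the genuinely new analytic work is the uniform-in-$k$ non-vanishing after specialization.
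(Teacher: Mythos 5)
Your outline reproduces the paper's strategy exactly --- the same two-stage plan of (i) finiteness of the nilpotent level and (ii) Fourier-analytic descent --- but Lemmas \ref{nilenf} and \ref{nilpup} are not external results you can cite: in the paper they are stated and proved \emph{inside} the proof of Proposition \ref{prop3}, and together they \emph{are} that proof. Since you invoke both as black boxes and supply only heuristics, the genuine gaps are precisely the proofs of those two lemmas. For the finiteness of the level, ``the shadow data becomes fine enough to separate the contributions'' is not an argument: the actual proof constructs a nested sequence of extremal subgraphs $\CT_v \supseteq \CT_v^{(2)} \supseteq \CT_v^{(3)} \supseteq \cdots$, one for each term of the lower central series, using that $L_i$ is central in $N_i$ and that $f$ acts trivially on each $L_i$ to make the higher translation functions $\mathfrak{t}^{(i)}$ additive on based cycles; residual nilpotence together with finiteness of $\CT_v$ forces the sequence to stabilize at a subgraph in which the group elements of the cycles based at any given vertex generate a cyclic subgroup, and the hypothesis $v \neq 0$ forces them all to be positive powers of one element, so no cancellation can occur. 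None of this construction appears in your proposal.

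For the descent step you explicitly concede that you do not know how to obtain non-vanishing for infinitely many $k$, and this is exactly where the work lies; the resolution is not a recurrence or growth argument. The cover is the specific characteristic cover $K_p = \ker\big(F_n \to H_1(F_n, \BZ/p\BZ)\big)$, chosen because $[a_1^p, \ldots, a_i^p] \equiv p^i [a_1, \ldots, a_i] \pmod{F_n^{(i+1)}}$, so that the part of the trace surviving at level $i-1$ in $K_p$ is the restriction to $p^{i-1}L_{i-1}$. One encodes $\mathfrak{t}^{(i)}_k$ as $\trace A^k$ for a matrix $A$ over $\BZ[M]$, realizes its eigenvalues as continuous root functions $\psi_1, \ldots, \psi_m$ on the dual torus of $M$, and observes that restriction to $p^{i-1}M$ becomes, after Fourier transform, evaluation at the finite set $R_p$ of points whose coordinates are $p^{i-1}$-th roots of unity. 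Non-nilpotence of $A$ gives a nonzero $\psi_j$; continuity plus density of $\bigcup_p R_p$ gives a $p$ with $\psi_j|_{R_p} \neq 0$; and the Vandermonde argument applied to the power sums $\sum_j \sum_{x \in R_p} \psi_j^k(x)$ gives non-vanishing for infinitely many $k$. (Also, the descending cover is this abelian cover of $\Gamma$ itself, not an abelian cover of the nilpotent cover $R_i$ as you describe.) Without these two arguments your proposal is a restatement of the proof strategy rather than a proof.
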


\nid We begin by giving some definitions, and recalling standard facts about nilpotent groups. 
\subsubsection{Assigning subgroups to subgraphs}  Let  $\gamma = \eta_1 \ldots \eta_k$ be a cycle in $\CT$ originating at the vertex $\eta$. The cycle $\gamma$ corresponds to a path $e_1 \ldots e_s \nu^{\pm 1}$ in $\varphi^k(\eta)$. Since $\gamma$ is a cycle, the last edge of this path is $\nu^{\pm 1}$. Thus, $e_1 \ldots e_s \in \pi_1(\Gamma, \beta)$.  Define the \emph{group element} of $\gamma$ to be $\mathfrak{G}(\gamma) = e_1 \ldots e_s \in \pi_1(\Gamma_0, \beta_0)$ if the last edge is $\eta$ and $e_1 \ldots e_s \eta^{-1}$ otherwise . Note that the image of $\mathfrak{G}(\gamma)$ in $H_1(\Gamma)$ is $\mathfrak{t}(\gamma)$.

Now let $\CC$ be a collection of cycles in $\CT$. Define  $$\mathfrak{G}(\CC) = \langle \FG(\gamma) | \gamma \in \CC \rangle \leq \pi_1(\Gamma, \beta)$$

For every  $\nu \in V(\CT)$, and any $g \in \FG(\CC)$, define $\CC(\nu, g, +)$ to be the set of all $\gamma \in \CC$ originating at $\nu$ such that $\mathfrak{s}(\gamma) = 1$, $\FG(\gamma) = g$.   Define $\CC(\nu, g, -)$ similarly. We say that $\FG(\CC)$ is \emph{non degenerate} if there exist  $\nu, g$ such that $\# \CC(\nu, g, +) \neq \# \CC(\nu, g, -)$. What this means is that there are some cycles which correspond to sub words that are not cancelled in $F_n$.  Note that if $v$ is a group-like vertex of $\CS \varphi$, and $\CC_v$ is the collection of simple cycles in $\CT_v$ then $\FG(\CC_v)$ is non-degenerate.

\subsubsection{Nilpotent quotients of free groups} Let $F = F_n$. The \emph{lower central series of F} is the the sequence of  subgroups given by the recursive definition $F^{(0)} = F$, $F^{(i+1)} = [F, F^{(i)}]$. We denote by  $N_i = F/F^{(i)}$. Let $L_i = F^{(i-1)}/ F^{(i)}$. We require the following standard facts: 

\begin{enumerate} 
\item The groups $F^{(i)}$ are characteristic subgroups of $F$. 
\item The groups $N_i$ are nilpotent. 
\item The groups $L_i$ are finitely generated torsion free abelian groups, and furthermore $L_i = Z(N_i)$. If $S$ is a generating set, then the set of elements of the form $[a_1, \ldots, a_i]$ (where $a_1, \ldots, a_i \in S$, and $[a_1, \ldots, a_i] = [a_1, [a_2, [a_3, \ldots]]]$) generate $L_{i+1}$. 
\item Let $f \in \tup{Aut}(F)$ be an automorphism that acts trivially on $L_1$. Then $f$ acts trivially on $L_i$ for every $i$. 
\end{enumerate}

\subsubsection{Proof of Proposition \ref{prop3}} We are now ready to proceed with the proof. 
\begin{proof}
Let $v$ be a group-like vertex of $\CS \varphi$, and let $\CT_v$ be its vertex subgraph. Note that if $v = 0$, then $v$ is already enfeoffed (this follows directly from the fact that $\sigma = I_n$. Suppose that $v \neq 0$. 

Fix $i \geq 0$. Let $\pi_i: F_n \to N_i$ be the quotient map onto the $i^{th}$ nilpotent quotient of $F_n$. For any $k \geq 0$, let $\mathfrak{t}_k[\CT_v, N_i] = \sum \mathfrak{s}(\gamma)\pi_i \circ \FG(\gamma) \in \BZ[N_i]$, where the sum is taken over all cycles of length $k$ in $\CT_v$. We say that $\CT_v$ is \emph {$i^{th}$ level nilpotent enfeoffed} if $\mathfrak{t}_k[\CT_v, N_i] \neq 0$ for infinitely many $k$. 

\begin{lemma}\label{nilenf} There exists an $i$ such that $\CT_v$ is $i^{th}$ level nilpotent enfeoffed.   
\end{lemma}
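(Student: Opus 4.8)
\emph{Plan of proof.} The strategy is to refine the signed cycle count one level at a time up the lower central series of $F_n$, using at each level an ``extremal subgraph'' argument in the spirit of the proof of Lemma~\ref{pitchfork} and of the shadow estimates of \cite{Had}, until the cancellation that kills $\mathfrak{t}_k[\CT_v,N_1]$ is forced to break. First I would lift the count to $\BZ[F_n]$: set $\mathfrak{t}_k[\CT_v,F_n]=\sum_{|\gamma|=k,\ \gamma\subseteq\CT_v}\mathfrak{s}(\gamma)\mathfrak{G}(\gamma)\in\BZ[F_n]$, so that $\mathfrak{t}_k[\CT_v,N_i]=\pi_i\big(\mathfrak{t}_k[\CT_v,F_n]\big)$. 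Every cycle $\gamma\subseteq\CT_v$ of length $k$ has $\mathfrak{G}(\gamma)\equiv kv$ in $H=L_1$, the same value for all of them, so the augmentation $c_k:=\sum\mathfrak{s}(\gamma)$ equals $\trace(\overline M^{\,k})$, where $\overline M$ is the integer matrix obtained from $A_\varphi[\CT_v]$ by augmenting $\BZ[H]\to\BZ$. If $\overline M$ is not nilpotent, then $c_k\neq0$ for infinitely many $k$, so $\mathfrak{t}_k[\CT_v,N_0]\neq0$ and we are done with $i=0$ (the case $v=0$ is immediate, as noted). Hence we may assume $v\neq0$ and $\overline M$ nilpotent, i.e. $c_k=0$ for all $k$.

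With the $L_1$-component of every length-$k$ cycle in $\CT_v$ pinned to $kv$, the element $\mathfrak{G}(\gamma)$ acquires a well-defined component $[\mathfrak{G}(\gamma)]_{L_2}\in L_2=F^{(1)}/F^{(2)}$; more generally, once the components below level $j$ have been pinned on a subgraph, $[\mathfrak{G}(\gamma)]_{L_j}\in L_j$ is well-defined. Using that $\mathfrak{G}$ is multiplicative along concatenation of cycles at a common vertex up to twisting the second factor by a power of $f$, together with the fact that $f$ acts trivially on every $L_j$, one checks that $[\mathfrak{G}(\cdot)]_{L_j}$ is additive over concatenation; hence the same construction that produces $\CS\varphi$ produces, on the current subgraph, a rational polytope $\CS^{(j)}\subset L_j\otimes\BR$, the convex hull of the normalized $L_j$-components of its simple cycles. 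The inductive step: if $\CS^{(j)}$ is a single point, keep the subgraph and pass to level $j+1$; if $\CS^{(j)}$ is positive-dimensional, pick a vertex $w$, and by the maximality argument of Lemma~\ref{pitchfork} run one level up (a cycle has extremal $L_j$-component precisely when each of its sub-loops does), the cycles realizing $w$ form an extremal sub-subgraph $\CT_{v,w}$, and the part of $\sum\mathfrak{s}(\gamma)[\mathfrak{G}(\gamma)]_{L_j}$ supported in the extremal stratum equals $\big(\sum_{\gamma\subseteq\CT_{v,w},\ |\gamma|=k}\mathfrak{s}(\gamma)\big)\,kw$. If the corresponding augmented integer matrix $\overline M[\CT_{v,w}]$ is non-nilpotent this is nonzero for infinitely many $k$, so $\mathfrak{t}_k[\CT_v,N_j]\neq0$ for those $k$ and we finish with $i=j$; otherwise replace the current subgraph by $\CT_{v,w}$ and continue at level $j+1$.

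Each replacement strictly decreases the number of edges, so only finitely many occur, and the process stabilizes on a nonempty subgraph $\CT_\infty$ on which every $\CS^{(j)}$ is a point and every associated augmented matrix is nilpotent. Since $F_n$ is residually nilpotent, all length-$k$ cycles of $\CT_\infty$ then carry the \emph{same} group element, so $\mathfrak{t}_k[\CT_\infty,F_n]=0$ for every $k$; and by an iterated $\pitchfork$-type separation of $\CT_\infty$ from its complement in the deep nilpotent quotients one gets $\mathfrak{t}_k[\CT_\infty,N_i]\neq0\Rightarrow\mathfrak{t}_k[\CT_v,N_i]\neq0$, so it suffices to obstruct this stable configuration. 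This is where the non-degeneracy of $\mathfrak{G}(\CC_v)$ must enter, and it is the hard part. The plan is: at each positive-dimensional stage choose $w$ to be the vertex of $\CS^{(j)}$ on which the non-degenerate simple cycles sit (they all carry the element $g$ witnessing non-degeneracy, hence share an $L_j$-component, so such a vertex exists); then, by the Lemma~\ref{pitchfork}-style maximality argument, show that this choice transports non-degeneracy to $\CT_{v,w}$, hence to $\CT_\infty$; and finally show that a subgraph which is stable under all these extremal operations, carries a constant group element in each length, and has identically vanishing signed counts cannot retain a non-degenerate simple cycle — a contradiction. Carrying out this propagation of non-degeneracy through the tower of extremal subgraphs, verifying the additivity of the $L_j$-components and the $\pitchfork$-separation used to pass conclusions from $\CT_\infty$ back to $\CT_v$, and making the ``same group element'' step precise via residual nilpotence, are the technical core; the rest is the bookkeeping above.
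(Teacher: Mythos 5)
Your architecture matches the paper's: lift the signed cycle count to successive nilpotent quotients, use additivity of the $L_j$-components (via centrality of $L_j$ and triviality of the $f$-action on it) to run the extremal-subgraph construction of Section~\ref{subord} at each level, and terminate the tower of refinements using residual nilpotence together with finiteness of $\CT_v$. Up to that point the proposal is essentially the paper's proof. The problem is that you stop exactly where the lemma is actually proved. The passage beginning ``This is where the non-degeneracy of $\mathfrak{G}(\CC_v)$ must enter, and it is the hard part. The plan is\dots'' defers the decisive step; what remains is not bookkeeping, it is the content of the lemma.

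Moreover, the route you sketch for that step has a concrete gap and points in the wrong direction. First, the claim that $\mathfrak{t}_k[\CT_\infty,F_n]=0$ for every $k$ does not follow from your reductions: cycles based at different vertices of $\CT_\infty$ carry genuinely different group elements of $F_n$, so nilpotency of the total augmented matrix only gives $\sum_\nu c_k(\nu)=0$, which does not force the group-ring element $\sum_\nu c_k(\nu)\,g_{\nu,k}$ to vanish. Second, and more importantly, the logic is inverted relative to the paper. The stable configuration is not something to be ``obstructed'' by a contradiction; it is where the conclusion is read off directly. In the paper, the tower stops when the group elements of the cycles based at each fixed vertex of $\CT_v^{(i)}$ generate a cyclic subgroup (if two such cycles generated a non-cyclic subgroup, residual nilpotence would separate them at some deeper level and produce a further proper refinement, which can happen only finitely often). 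At that point the hypothesis $v\neq 0$ forces all these group elements to be \emph{positive} powers of a single element, with exponent determined by the cycle length, and this rigidity is what yields $\mathfrak{t}_k[\CT_v^{(i)},N_i]\neq 0$ for infinitely many $k$ -- a positive conclusion, not a reductio. Your alternative, propagating the non-degeneracy of $\mathfrak{G}(\CC_v)$ through every extremal refinement and then contradicting vanishing, is asserted rather than carried out, and it is not clear that non-degeneracy survives the particular vertex choices you prescribe. To repair the proposal, replace the contradiction scheme by the paper's termination criterion (cyclicity of the generated subgroups at each basepoint) and the positivity argument that follows from $v\neq 0$.
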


\begin{proof}
Since $\CT_v$ is a vertex graph, for any two cycles $\gamma_1, \gamma_2$ of length $k$ in $\CT_v$ we have that $\pi_1 \circ \FG(\gamma_1) = \pi_1 \circ \FG(\gamma_2)$. Pick $x \in F_n$  whose image in $H_1(F_n)$ is $v$. For any $j$, denote  $h_{2,j} = \pi_2 (x^{-1} f^j(x)) \in  L_2$. For any  cycle $\gamma$ in $\CT_v$ of length $k$, we can write $$\pi_2 \circ \mathfrak{g} (\gamma) = \mathfrak{t}^{(2)}(\gamma)\prod_{j=1}^k \pi_2(x) h_{2,k-j}$$

\nid for some $\mathfrak{t}^{(2)}(\gamma) \in L_2$. By the definition of the action of $\varphi$, the fact that $L_2$ is central in $N_2$, and the fact that $f$ acts trivially on $L_2$, we have that  for any cycle $\gamma$ in $\CT_v$ of length $k$ that is a concatenation of simple cycles $\gamma_1, \ldots, \gamma_r$ based at that same point as $\gamma$ we have that: $\pi_2 \circ \mathfrak{g} (\gamma) = \mathfrak{t}^{(2)}(\gamma)\prod_{j=1}^k \pi_2(x) h_{2,k-j}$, where  $ \mathfrak{t}^{(2)}(\gamma) = \prod_{i=1}^r  \mathfrak{t}^{(2)}(\gamma_i)$. Thus, if we set $$\mathfrak{t}^{(2)}_k[\CT_v] = \sum \mathfrak{s}(\gamma) \mathfrak{t}^{(2)}(\gamma) \in \BZ[L_2]$$

\nid where the sum is taken over all based cycles of length $k$ in $\CT_v$, we have that:
$$\mathfrak{t}^{(2)}_k[\CT_v, N_i] =  \mathfrak{t}_k[\CT_v, N_i] \prod_{j=1}^k \pi_2(x) h_{2,k-j}$$

In contrast to the expressions of the form $\mathfrak{t}(\gamma)$ that we used in previous sections, the function $\mathfrak{t}^{(2)}(\gamma)$ is not necessarily invariant under a cyclic reordering of $\gamma$. However, by the additivity property discussed above, it is invariant under cyclic reorderings that preserve the basepoint. 

We define a new function $\mathfrak{bft}^{(2)}$ (or basepoint free translation) that assigns to each cycle $\gamma$ an element of $L_2^{V(\CT_v)}$ in the following way: for a vertex $\eta$ of $\CT_v$ and a cycle $\gamma$, define $(\mathfrak{bft}^{(2)})_\eta$ to be $0$ if $\gamma$ does not pass through $\eta$ and $\mathfrak{t}^{(2)}(\gamma_\eta)$ otherwise, where $\gamma_\eta$ is a cyclic reordering of $\gamma$ so that it is based at $\eta$. 

The function $\mathfrak{bft}^{(2)}$ is an additive function on cycles  to a torsion free, finitely generated abelian group whose value does not depend on cyclic reordering.   By the same reasoning as in section \ref{subord}, we have a (not necessarily proper) extremal subgraph $\CT_v^{(2)} \subset \CT_v$ such that for any cycle $\gamma$ in $\CT_v^{(2)}$ of length $k$, $\mathfrak{bft}^{(2)}(\gamma) = kv^{(2)}$ for some $v^{(2)} \in L_2^{V(\CT_v)} \otimes \BQ$,  and such that if $\CT_v^{(2)}$ is $i^{th}$ level nilpotent enfeoffed then so is $\CT_v$. By ignoring non-group like vertices, we can assume this graph is group-like. 

Since $kv \in  L_2^{V(\CT_v)}$ whenever $k$ is the length of a cycle in $\CT_v^{(2)}$, there is an integer $l$ such that $lv \in \CT_v^{(2)}$, and all cycles in $\CT_v^{(2)}$ have length divisible by $l$. Now denote  $h_{3,j} = \pi_3 (x^{-l} f^{jl}(x^l))$. Using the same reasoning as in the previous paragraph, we can write for any simple loop $\gamma$ in $\CT_v^{(2)}$, $$\pi_3 \circ \mathfrak{g} (\gamma) = \mathfrak{t}^{(3)}(\gamma)\prod_{j=1}^{k/l} \pi_3(x^{jl}) h_{3,k-j}$$
for some $t^{(3)}(\gamma) \in L_3$. Proceeding exactly as before, we find a (not necessarily proper) extremal subgraph $\CT^{(3)}_v \subset \CT_v^{(2)}$ where for any loop $\gamma$, $\mathfrak{t}^{(3)}(\gamma) = k v^{(3)}$ where $k$ is the length of $\gamma$ and $v^{(3)} \in L_3^{V(\CT_v^{(2)})}\otimes \BQ$. By construction, if $\CT^{(3)}_v$ is $i^{th}$ level nilpotent enfeoffed then so is $\CT_v^{(2)}$.

Proceeding in this manner, we get a sequence of graphs $\ldots \subset \CT^{(4)}_v \subset \CT^{(3)}_v \subset \CT^{(2)}_v \subset \CT_v$. Since $F_n$ is residually nilpotent, there exists an $i$ such that the subgroup of $F_n$ generated by the group elements assigned to all the cycles in $\CT_v^{(i)}$ based at any given vertex of this graph is cyclic. Indeed, otherwise if there were always two such cycles whose group elements did not form a cyclic group then by residual nilpotence one could pass to a further proper subgraph. Since $\CT_v$ is a finite graph, this process has to terminate. Since all the group elements of the loops in this group must have the same sign (this follows from the fact that $v \neq 0$, so they must all be positive powers of some fixed element), we have that $\CT_v^{(i)}$ is $i^{th}$ level nilpotent enfeoffed. 

\end{proof}

\begin{lemma} \label{nilpup}
Suppose that $\CT_v$ is the extremal graph of the group like vertex $v$, and that $\CT_v$ is $i^{th}$ level nilpotent enfeoffed. Then there is a characteristic cover $\pi_0: \Gamma_0 \to \Gamma$ where the lift of $\CT_v$ is $(i-1)^{th}$ level nilpotent enfeoffed. 
\end{lemma}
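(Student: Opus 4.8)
The goal is a descent step: if $\CT_v$ is $i^{th}$ level nilpotent enfeoffed, produce a characteristic cover $\Gamma_0 \to \Gamma$ where the lift of $\CT_v$ is $(i-1)^{th}$ level nilpotent enfeoffed. The essential idea is that "$i^{th}$ level nilpotent enfeoffed" means $\mathfrak{t}_k[\CT_v, N_i] \neq 0$ for infinitely many $k$, i.e.\ the sum over length-$k$ cycles in $\CT_v$ of $\mathfrak{s}(\gamma)\,\pi_i\!\circ\!\FG(\gamma)$ does not vanish in $\BZ[N_i]$, even though (since $v$ is not yet enfeoffed) all these contributions collapse modulo $F^{(2)}$. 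Write $N_i \to N_{i-1}$ with central kernel $L_i = F^{(i-1)}/F^{(i)}$, a finitely generated free abelian group on which $f$ acts trivially. So $\BZ[N_i]$ is a module over $\BZ[L_i] \cong \BZ[\BZ^d]$, and the non-vanishing of $\mathfrak{t}_k[\CT_v,N_i]$ is, coset by coset over $N_{i-1}$, the non-vanishing of a finite $\BZ[L_i]$-linear combination of the $L_i$-"error terms" recorded by each cycle. The cover we seek will be an abelian cover detecting exactly these $L_i$-coordinates.

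First I would set up, for each vertex $\eta$ of $\CT_v$ and each cycle $\gamma$ through $\eta$, the decomposition $\pi_i \circ \FG(\gamma) = \mathfrak{t}^{(i)}(\gamma_\eta)\cdot \overline{\FG(\gamma)}$ lifted from $N_{i-1}$ — exactly parallel to the $N_2$ computation in the proof of Lemma \ref{nilenf}, using that $L_i$ is central in $N_i$ and $f$ acts trivially on $L_i$ — so that $\mathfrak{t}_k[\CT_v,N_i]$ is recovered from the $N_{i-1}$-reductions together with a finite family of $L_i$-valued additive invariants of cycles. Then, since $\CT_v$ was already partitioned (in the nilenf construction, via the basepoint-free translation $\mathfrak{bft}^{(i)}$) into its extremal subgraph on which all cycles have a common normalized $L_i$-translation, the obstruction to $\CT_v$ being enfeoffed at level $i-1$ is purely that, inside a single $N_{i-1}$-coset, several cycles of the same length have $L_i$-translations differing by nonzero elements of $L_i$ and thus cancel in $\BZ[N_i]$ but would survive in $\BZ[N_{i-1}]$ if those $L_i$-differences could be "seen." Now take $\Gamma_0 \to \Gamma$ to be a characteristic finite cover through which a sufficiently fine finite quotient of $L_i$ factors — concretely, pull back along $F_n \to L_i \to L_i/NL_i$ for large $N$, or equivalently work in the cover corresponding to $F^{(i)}\cdot(F^{(i-1)})^N$ — and lift $\varphi$ to it (replacing $f$ by a power to arrange the lift, which is harmless by Corollary \ref{cor1} and the observation that enfeoffment passes to powers). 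In this cover the deck group records the $L_i$-coordinate mod $N$, so the previously-cancelling cycles now land in distinct deck-group cosets; a Fourier-analytic argument on the abelian group $L_i/NL_i$ — in the spirit of Lemma \ref{lemma2.1} and Proposition \ref{fourier} — shows that for a suitable character $\psi$ (hence a suitable finite further abelian cover) the specialized sum $\mathfrak{t}_k[\CT_v, N_{i-1}]$ for the lifted graph is nonzero for infinitely many $k$. One must check the nonvanishing is not destroyed by passing to a power of $f$; this follows because the set of $k$ with $\mathfrak{t}_k \neq 0$ is, after fixing the period of $\sigma$, an infinite union of congruence classes, as in the observations on enfeoffed subgraphs.

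The main obstacle I expect is the bookkeeping that makes "$L_i$-translations differ" into a clean statement about a single additive $L_i$-valued invariant that (a) is genuinely nonzero on the relevant cycles and (b) is faithfully recorded by the deck group of a finite abelian cover. The danger is that the $L_i$-errors of different cycles in the same $N_{i-1}$-coset could, despite being individually nonzero, sum to something that still cancels after every specialization $L_i \to L_i/NL_i \to S^1$ — one needs that the family of $L_i$-values arising is "rich enough," which is where the residual-nilpotence / extremality structure built in Lemma \ref{nilenf} (the fact that $\CT_v^{(i)}$ is the graph on which the $\mathfrak{bft}^{(i)}$-translation is extremal, so genuine spread in $L_i$ occurs strictly \emph{at} level $i$ and not before) must be invoked to guarantee that the $\BZ[L_i]$-combination is not identically zero as an element of $\BZ[L_i]$, whence some character detects it. Managing the interaction between "infinitely many $k$," the period of $\sigma$, and the choice of $N$ and $\psi$ simultaneously is the fiddly core of the argument; everything else is a transcription of the $N_2$ case already carried out.
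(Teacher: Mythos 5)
There is a genuine gap, and it sits exactly at the crux of the lemma: the construction of the finite characteristic cover. You propose to ``pull back along $F_n \to L_i \to L_i/NL_i$,'' or equivalently to use the subgroup $F^{(i)}\cdot(F^{(i-1)})^N$. Neither works: there is no homomorphism $F_n \to L_i$ (the group $L_i = F^{(i-1)}/F^{(i)}$ is a subquotient of $F_n$, not a quotient), and the subgroup $F^{(i)}\cdot(F^{(i-1)})^N$ has infinite index in $F_n$ (the quotient by it surjects onto the infinite nilpotent group $N_{i-1}$), so it defines no finite cover at all. Relatedly, your descent mechanism --- ``the deck group records the $L_i$-coordinate mod $N$, so the previously-cancelling cycles land in distinct deck-group cosets'' --- is not what the conclusion requires: being $(i-1)^{th}$ level nilpotent enfeoffed in the cover is a statement about the lower central series of the cover's own fundamental group $K$, i.e.\ about non-vanishing in $\BZ[K/K^{(i-1)}]$, not about separating cycles by deck transformations. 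You never explain why the nilpotency level genuinely drops by one in the cover, and that drop is what makes the induction in Proposition \ref{prop3} terminate.

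The paper's cover is the mod-$p$ homology cover $K_p = \ker\big(F_n \to H_1(F_n,\BZ/p\BZ)\big)$, which is finite index and characteristic, and the descent mechanism is the congruence $[a_1^p,\ldots,a_i^p] \equiv p^i[a_1,\ldots,a_i]$ modulo $F_n^{(i+1)}$: it implies that the relevant term of the lower central series of $K_p$ meets $L_{i-1}$ precisely in the sublattice $p^{i-1}L_{i-1}$. Hence the part of the level-$i$ translation sum supported on $p^{i-1}L_{i-1}^{V(\CT_v^{(i)})}$ is exactly the level-$(i-1)$ data for the lift of $\CT_v$ to $K_p$, and the lemma reduces to showing that for some $p$ this restricted sum is nonzero for infinitely many $k$. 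That last step is close in spirit to the Fourier argument you sketch, but is made precise as follows: encode $\mathfrak{t}^{(i)}_k$ as $\trace A^k$ for a matrix $A$ over $\BZ[M]$ with $M = L_{i-1}^{V(\CT_v^{(i)})}$, choose continuous root functions $\psi_1,\ldots,\psi_m$ of the specialized characteristic polynomial over the dual torus of $M$, observe that restriction to $p^{i-1}M$ is Fourier-dual to summation over the finite subgroup $R_p$ of $p^{i-1}$-th roots of unity, and use non-nilpotence of $A$ (which is the hypothesis of $i^{th}$ level enfeoffment) together with density of $\bigcup_p R_p$ and a Vandermonde argument to find a good $p$. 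Your worry about ``cancellation after every specialization'' is legitimate and is resolved exactly by this density-of-roots-of-unity argument; but without the congruence cover $K_p$ and the $p$-th power commutator identity, the reduction to that Fourier statement is not available, so the proof as proposed does not go through.
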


\begin{proof} Let $a_1, \ldots, a_i \in F_n$, and let $p \in \BN$. Then a calculation shows that $$[a_1^p, \ldots, a_i^p] \equiv p^i [a_1, \ldots, a_i] \tup{ } (\tup{mod } F_n^{(i+1)})$$
(see for example \cite{Had2}). Thus, if we set $K_p < F_n$ to be the kernel of the map $F_n \to H_1(F_n, \BZ/p\BZ)$, then the image of $K_p^{(i-1)}$ in $L_{i-1}$ is $p^{i-1}L_{i-1}$. Note that this cover is characteristic. 

\nid Using the notation of Lemma \ref{nilenf}, let $$\mathfrak{bft}^{(i)}_k[\CT_v^{(i)}] = \sum \mathfrak{s}(\gamma) \mathfrak{bft}^{(i)}(\gamma)  $$ 
where the sum is taken over all loops of length $k$ in $\CT_v^{(i)}$. Let $\mathfrak{bft}^{(i)}_k[\CT_v^{(i)},p]$ be the sum we get by taking only the summand of $\mathfrak{bft}^{(i)}_k[\CT_v^{(i)}]$ that lie in $p^{i-1}L^{V(\CT_v^{(i)})}_{i-1}$.  If we show that there exists a $p$ such that for infinitely many values of $k$, $\mathfrak{bft}^{(i)}_k[\CT_v^{(i)},p] \neq 0$, then we will have that $\CT_v$ is $(i-1)^{th}$ level enfeoffed in $K_p$. 

Denote $M =  L_{i-1}^{V(\CT_v^{(i)})}$. By construction, the map $\mathfrak{bft}^{(i)}$ gives a homomorphism from $H_1(\CT_v^{(i)})$ to $ M$. It can thus be extended to a homomorphism from $C_1(\CT_v^{(i)}) \to M$. Thus, if $\CT_v^{(i)}$ has $m$ vertices, we can construct a matrix $A \in GL_m(\BZ[M])$ such that $\mathfrak{t}^{(i)}_k[\CT_v^{(i)}] = \trace A^k$, for any $k$. 

Let $Q$ be the fraction ring of $Z = \BZ[M]$. Fix some algebraic closure of $Q$. Let $\rho \in \BZ[X]$ be the characteristic polynomial of $A$,  let $K$ be its splitting field, and let $\alpha_1, \ldots, \alpha_m$ be its roots.  Let $r = \rank M$, and let $\BT = (S^1)^r$ be its dual group. 

We have a map $\mathfrak{e}: \BT \to \BC_{\leq m}[X]$ given by sending $\xi \in \BT$ to the polynomial obtained by applying $\xi$ to the coefficients of $\rho$. The map  $\xi$ is coordinate-wise real analytic.  Let $$\CX  = \{(\xi, z) \in \BT \times \BC : \mathfrak{e}(\xi)[z] = 0 \}$$ 

Let $\pi_\BT: \CX \to \BT$ be projection onto the first coordinate. For any $\xi$, counting with multiplicity, $\#\pi_\BT^{-1}(\xi) = m$. We say that a function $\psi: \BT \to \BC$ is a \emph{root} if for every $\xi \in \BT$, $(\xi, \psi(\xi)) \in \CX$.\label{roots}Since $\CX$ is defined by real analytic equations, we can choose $m$ continuous roots, $\psi, \ldots, \psi_m$ such that for each $\xi$, $\psi_1(\xi), \ldots, \psi_m(\xi)$ is the set of all solutions to $\mathfrak{e}(\xi)$, counted with multiplicity. Let $\wh{\psi_1}, \ldots, \wh{\psi_m} \in L^1[M]$ be the inverse Fourier transforms of these roots. This gives an embedding of the ring $R = Z[\alpha_1, \ldots, \alpha_m]$ into $L^1[H]$. Note that $K$ is isomorphic to the quotient ring of $R$. By choosing such an embedding, we will identify $\alpha_1, \ldots, \alpha_m$ with specific elements of $L^1[M]$. 

For any $p, k$, we have that $\mathfrak{t}^{(i)}_k[\CT_v^{(i)},p] = \sum_{j=1}^m [\alpha_j^k]_{p^{i-1}}$, where $[\cdot]_{p^{i-1}}$ denotes taking only the summands in $p^{i-1}M$.  If we set $I_p$ to be the indicator function of $p^{i-1}M$, we can write $\mathfrak{t}^{(i)}_k[\CT_v^{(i)},p] = \sum_{j=1}^m \alpha_j^k \cdot I_p$. The inverse Fourier transform of $I_p$ is the distribution $\sum_{x \in R_p} \delta_x$, where $\delta_x$ is a Dirac distribution supported at $x$, and $R_p$ is the set of all points in $\BT$ whose coordinates are all $p^{i-1}$ roots of unity.  Using the fact that the Fourier transform is an isometry, we get that:

$$\mathfrak{t}^{(i)}_k[\CT_v^{(i)},p] = \sum_{j=1}^m \sum_{x \in R_p} \psi_j^k(x)$$

By the Vandermonde theorem, the expression on the right hand is nonzero for infinitely many values of $k$ if and only if $\psi_j|_{R_p} \neq 0$ for every $j$. Since $A$ is not nilpotent (this follows from the fact that $\CT_v^{(i)}$ is $i^{th}$ level nilpotent enfeoffed), we have that one of the $\psi_j$'s is a nonzero function. Since all these functions are continuous, and union of all $R_p$'s is dense in $\BT$, we can find a $p$ and a $j$ such that $\psi_j|_{R_p} \neq 0$. We  deduce that $\CT_v$ is $(i-1)^{th}$ level nilpotent enfeoffed in $K_p$. 

\end{proof}

By Lemma \ref{nilenf}, the graph $\CT_v$ is $i^{th}$ level nilpotent enfeoffed for some $i$. By repeated application of Lemma \ref{nilpup}, we can either find a cover where the homological action has infinite order, or one where the lift of $\CT_v$ is $1^{st}$ level nilpotent enfeoffed. Since this is the same as our original definition of being enfeoffed, we are done. 

\end{proof}

\subsection{Proof of Theorem \ref{theorem2}}
If $\sigma$ has infinite order, we are done. Otherwise, by replacing $f$ with a power of itself we can assume that $\sigma$ is trivial and every vertex of $\CS f$ is a group-like vertex of $\CS \varphi$.  By Proposition \ref{prop3}, for each such vertex we can find a cover where it is enfeoffed. Byperforming this process for every group-like vertex, we find a cover where every group-like vertex of $\CS \varphi$ is enfeoffed. Since we have only taken abelian covers at each step, this cover is solvable. The result now follows from Corollary \ref{cor1}.

\end{document}